\theoremstyle{plain}
\newtheorem{theorem}{Theorem}[section]
\newtheorem{claim}[theorem]{Claim}
\newtheorem{lemma}[theorem]{Lemma}
\theoremstyle{definition}
\newtheorem{definition}{Definition}
\newcommand{\cF}{\mathcal{F}}
\newcommand{\cB}{\mathcal{B}}
\newcommand{\C}{\mathcal{C}}
\newcommand{\cE}{\mathcal{E}}
\newcommand{\cS}{\mathcal{S}}
\renewcommand{\Re}{{\mathbb R}}
\newcommand{\Red}{\Re^d}
\DeclareMathOperator{\vol}{vol}
\DeclareMathOperator{\diam}{diam}
\title{The Quantitative Fractional Helly theorem}
\author{N\'ora Frankl\thanks{The Open University and HUN-REN Alfr\'ed R\'enyi Institute of
Mathematics, \emph{e-mail}: \textbf{nora.frankl@open.ac.uk}. Partially supported by ERC Advanced Grant 'GeoScape'.}, Attila Jung\thanks{E\"otv\"os University and HUN-REN Alfr\'ed R\'enyi Institute of
Mathematics, \emph{e-mail}: \textbf{jungattila@gmail.com}. Research supported by the ERC Advanced Grant ``ERMiD'' and by the Thematic Excellence Program TKP2021-NKTA-62 of the National Research, Development and Innovation Office.}, Istv\'an Tomon\thanks{Ume\r{a} University, \emph{e-mail}: \textbf{istvan.tomon@umu.se}. Research supported in part by the Swedish Research Council grant VR 2023-03375.}}
\date{}
\begin{document}
\sloppy

\maketitle

\begin{abstract}
    Two celebrated extensions of Helly's theorem are the \emph{Fractional Helly theorem} of Katchalski and Liu (1979) and the \emph{Quantitative Volume theorem} of B\'ar\'any, Katchalski, and Pach (1982). Improving on several recent works, we prove an optimal combination of these two results. We show that given a family $\cF$ of $n$ convex sets in $\mathbb{R}^d$ such that at least $\alpha \binom{n}{d+1}$ of the $(d+1)$-tuples of $\cF$ have an intersection of volume at least 1, then one can select $\Omega_{d,\alpha}(n)$ members of $\cF$ whose intersection has volume at least $\Omega_d(1)$.

 Furthermore, with the help of this theorem, we establish a quantitative version of the $(p,q)$ theorem of Alon and Kleitman. Let $p\geq q\geq d+1$ and let $\cF$ be a finite family of convex sets in $\mathbb{R}^d$ such that among any $p$ elements of $\cF$, there are $q$ that have an intersection of volume at least $1$.  Then, we prove that there exists a family $T$ of $O_{p,q}(1)$ ellipsoids of volume $\Omega_d(1)$ such that every member of $\cF$ contains at least one element of $T$.

 Finally, we present extensions about the diameter version of the Quantitative Helly theoerm.
\end{abstract}

\section{Introduction}

\emph{Helly's theorem} \cite{Helly} is one of the fundamental results of discrete geometry. Discovered in 1913, it states that if $\cF$ is a family of $n$ convex sets in $\mathbb{R}^d$ such that the intersection of any $d+1$ elements of $\cF$ is nonempty, then the intersection of all elements of $\cF$ is also nonempty. By considering a family of hyperplanes in general position, it is easy to see that the theorem is no longer true if $d+1$ is replaced with any smaller number. Helly's theorem inspired a large number of generalizations and extensions, see the recent surveys \cite{BK_Survey,HW}.

The \emph{Fractional Helly theorem} of Katchalski and Liu \cite{KL79} states that if one only assumes that an $\alpha$-fraction of the $(d+1)$-tuples of $\cF$ have a nonempty intersection, then one can select a $\beta$-fraction of $\cF$ with a nonempty intersection for some $\beta=\beta(d,\alpha)>0$. The optimal choice  $\beta=1-(1-\alpha)^{1/(d+1)}$ was determined by Kalai \cite{Kalai84}. The Fractional Helly theorem played an important role in the resolution of the celebrated Hadwiger-Debrunner $(p,q)$-problem \cite{HD57} by Alon and Kleitman \cite{AK92}, which we discuss in more detail later in this section.

Another important extension of Helly's theorem, called the \emph{Quantitative Volume theorem}, gives information about the volume of the intersection. This theorem, proved by B\'ar\'any, Katchalski, and Pach \cite{BKP}, states that if any $2d$ members of $\cF$ have an intersection of volume at least 1, then the intersection of the elements of $\cF$ has volume at least $v(d)$ for some $v(d)>0$ depending only on $d$. Brazitikos \cite{Braz} proved, slightly improving on a breakthrough result of Nasz\'odi \cite{Nasz16}, that the maximal volume $v(d)$ satisfying this property is between $(cd)^{-3d/2}$ and $(cd)^{-d/2}$ for some absolute constant $c>0$. Note that in the Quantitative Volume theorem, it is not enough to consider the intersections of $k$ element subsets of $\cF$ for any $k<2d$, as witnessed by the following construction. For any $\varepsilon>0$, consider a cube $C$ of volume $\varepsilon$ in $\mathbb{R}^d$, and let $\cF$ be the set of $2d$ halfspaces bounded by the face hyperplanes of $C$, each of which contains $C$. Then any $2d-1$ elements of $\cF$ have an intersection of infinite volume, while the intersection of all elements of $\cF$ is equal to $C$, thus has volume $\varepsilon$.

Recently, there were several attempts to establish a combination of the Fractional Helly theorem and the Quantitative Volume theorem, which one may call as the \emph{Quantitative Fractional Helly theorem} (or QFH for short). Sarkar, Xue, and Sober\'on \cite{SXS} proved that if $k=d(d+3)/2$ and $\cF$ is family of $n$ convex sets in $\mathbb{R}^d$ such that at least $\alpha$-fraction of the $k$-tuples  $(A_1,\dots,A_k)\in \cF^{(k)}$ satisfy $\vol(A_1\cap\dots\cap A_k)\geq 1$, then one can select at least $\beta$-fraction of the elements of $\cF$ such that their intersection has volume at least $v(d)$, where $\beta=\beta(d,\alpha)>0$ and $v(d)>0$. They showed that one can take $v(d)=d^{-d}$, and proposed the conjecture that a similar statement holds for $k=2d$ as well. In this direction, Jung and Nasz\'odi \cite{JN} showed that one can take $k=3d+1$ with $v(d)=d^{-O(d^2)}$, which was further improved by Jung \cite{Jung} to $k=3d$ with $v(d)=d^{-O(d^3)}$. Our first theorem proves the conjecture of Sarkar, Xue, and Sober\'on \cite{SXS}. 

\begin{theorem}\label{thm:QFH_2d}
    For every positive integer $d$, there exists $c_0=c_0(d)>0$ such that the following holds. Let $\cF$ be a family of $n$ convex sets in $\mathbb{R}^d$ such that at least $\alpha \binom{n}{2d}$ of the $(2d)$-tuples of members of $\cF$ have an intersection of volume at least $1$. Then there are at least $c_0\alpha n$ members of $\cF$ whose intersection has volume at least $d^{-c_1d}$ for some $c_1>0$.
\end{theorem}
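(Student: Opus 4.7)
The strategy is to combine the Quantitative Volume theorem of B\'ar\'any--Katchalski--Pach with Kalai's Fractional Helly theorem, via a discretization over the space of ellipsoids. The starting point is that for each heavy $(2d)$-tuple $T$ with $\vol(\bigcap T)\geq 1$, John's theorem applied to the convex body $\bigcap T$ yields an ellipsoid $E_T\subseteq \bigcap T$ of volume at least $d^{-cd}$. In particular, every one of the $2d$ sets in $T$ contains $E_T$.

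The heart of the argument is to extract from this a single ellipsoid $E^*$ contained in $\bigcap T$ for a positive fraction of heavy tuples $T$. Since the parameter space of ellipsoids in $\Re^d$ (centers, orientations, and axis lengths) is unbounded, I would first reduce to a bounded region. A preliminary averaging argument shows that many sets of $\cF$ appear in many heavy tuples; applying Kalai's Fractional Helly to the shadow of the heavy tuples on $(d+1)$-subtuples then produces a large subfamily whose members share a common point $p$. After translating so that $p=0$ and rescaling, the surviving John ellipsoids $E_T$ can be localized to a compact region of the ellipsoid parameter space, which admits a $d$-dependent $\varepsilon$-net $\cE$; pigeonhole over $\cE$ then selects the desired $E^*$.

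Finally, set $\mathcal{G}=\{A\in\cF : E^*\subseteq A\}$. Every heavy tuple from the previous step lies entirely inside $\mathcal{G}$, so $\binom{|\mathcal{G}|}{2d}\geq \beta\binom{n}{2d}$ for some $\beta=\beta(d,\alpha)>0$. Passing to the $(d+1)$-shadow once more and invoking Kalai's optimal Fractional Helly bound upgrades this counting estimate to the linear-in-$\alpha$ bound $|\mathcal{G}|\geq c_0\alpha n$. Since $\bigcap_{A\in\mathcal{G}}A\supseteq E^*$, its volume is at least $d^{-c_1 d}$, as required.

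The main obstacle will be the compactification step in the second paragraph: localizing the ellipsoids $E_T$ to a bounded parameter region so that a $d$-dependent net suffices. This is precisely where the threshold $k=2d$ of BKP becomes essential --- for $k<2d$, the standard half-space construction allows arbitrarily anisotropic heavy intersections and no common template ellipsoid can exist, while at $k=2d$ the rigidity guaranteed by BKP is exactly what makes the discretization feasible.
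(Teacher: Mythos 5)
Your overall architecture---find one template body of volume $\Omega_d(1)$ contained in many members of $\cF$, via discretization and pigeonhole---is in the spirit of the paper, and your final compactness-plus-pigeonhole step closely matches the paper's last paragraph. But the step you yourself flag as the main obstacle, localizing the John ellipsoids $E_T$ to a compact parameter region, is a genuine gap, and the fix you propose does not work. Knowing that a large subfamily of $\cF$ has a common point $p$ gives no control on where the volume-$1$ parts of the heavy intersections sit: the members of that subfamily can be long thin sets through $p$, so that $\bigcap T$ is an unbounded or highly eccentric set whose John ellipsoid lies arbitrarily far from $p$ and has arbitrarily skewed axis lengths. There is also no natural scale to ``rescale'' by, since the sets may be unbounded (e.g.\ halfspaces). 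Moreover, even a compact parameter space would not by itself help, because different heavy tuples have genuinely different intersections; you need an anchor shared across many tuples before any $\varepsilon$-net argument can bite.

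The paper supplies exactly this missing anchor with a covering-halfspace argument: to each good $(2d)$-tuple $I$ it assigns the $(2d-1)$-subset $J\subset I$ whose intersection's covering halfspace $H(t)=\{x\in\Red: x_d\le t\}$ (the halfspace cutting off volume exactly $1$) has maximal $t$; pigeonhole produces a single $(2d-1)$-tuple $J_0$ assigned to at least $\alpha n/(4d)$ good tuples, and the maximality of $t$ lets one verify the hypotheses of the Quantitative Volume theorem for $\{H_0,K\}\cup J_0$, yielding $\vol(K\cap L)\ge d^{-cd}$ for the \emph{fixed} volume-$1$ body $L=H_0\cap\bigcap_{C\in J_0}C$ and every $K$ in a set of size $\alpha n/(4d)$. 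Only then does a compactness argument (over convex bodies inside a fixed ball of radius $d$ containing $L$, not over all ellipsoids of $\Red$) produce the common template. Note also that this pigeonhole over $(2d-1)$-subsets is what yields the linear bound $c_0\alpha n$: your route via $\binom{|\mathcal{G}|}{2d}\ge\beta\binom{n}{2d}$ only gives $|\mathcal{G}|\gtrsim\beta^{1/(2d)}n$, and Kalai's fractional Helly theorem cannot ``upgrade'' the cardinality of a set that is already determined by $E^*$.
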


While the Quantitative Volume theorem does not hold if one only considers the intersections of $k$-tuples for any $k<2d$, this does \emph{not} imply that the Quantitative Fractional Helly theorem cannot hold either with $k<2d$. Indeed, our next theorem goes beyond and shows that the optimal value is $k=d+1$. Observe that a family of thickened hyperplanes in general position shows that it is not possible to prove a QFH theorem with $k\leq d$.

\begin{theorem}\label{thm:QFH_d+1}
   For every positive integer $d$, there exists $v(d)>0$, and for every $\alpha\in (0,1)$, there exists $\beta=\beta(d,\alpha)>0$ such that the following holds. Let $\cF$ be a family of $n$ convex sets in $\mathbb{R}^d$ such that at least $\alpha \binom{n}{d+1}$ of the $(d+1)$-tuples of members of $\cF$ have an intersection of volume at least $1$. Then one can find at least $\beta n$ elements of  $\cF$ whose intersection has volume at least $v(d)$.
\end{theorem}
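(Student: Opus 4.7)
My plan is to find a single ellipsoid $E^\ast \subseteq \Red$ of volume $v(d) > 0$ that is contained in at least $\beta n$ members of $\cF$; the intersection of this subfamily then contains $E^\ast$, giving the theorem. For each of the $m := \alpha \binom{n}{d+1}$ good $(d+1)$-tuples $T$ I would extract, via John's theorem applied to $I_T := \bigcap T$ (truncating to a bounded sub-body of volume at least $1$ if $I_T$ is unbounded), an ellipsoid $E_T \subseteq I_T$ with $\vol(E_T) = d^{-d}$. Since $E_T$ is contained in every $A \in T$, it suffices to produce an $E^\ast$ of volume $\Omega_d(1)$ lying inside at least $\gamma n^{d+1}$ of the $E_T$: the subfamily $\cF^\ast := \{A \in \cF : E^\ast \subseteq A\}$ then contains $\geq \gamma n^{d+1}$ of the good tuples, so $\binom{|\cF^\ast|}{d+1} \geq \gamma n^{d+1}$ and hence $|\cF^\ast| = \Omega(n)$.

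Finding a popular $E^\ast$ is the main technical task. The natural approach is to pigeonhole the ensemble $\{E_T\}_T$ using a partition of the space of ellipsoids of volume $d^{-d}$ into cells, each having the property that any two ellipsoids lying in the same cell share a common sub-ellipsoid of volume $\Omega_d(1)$. The obstacle is that this parameter space is non-compact: the center ranges over $\Red$ while the shape ranges over an unbounded cone of positive-definite matrices of fixed determinant, so no constant number of cells can cover it. My plan is to handle this in two stages. First, use the finiteness of the ensemble and a Vitali-type covering in the joint center--shape parameter space to reduce, at the cost of a constant factor, to $\Omega(n^{d+1})$ ellipsoids $E_T$ lying in a bounded region with bounded anisotropy. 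Second, discretize this bounded region into $O_d(1)$ cells with the common sub-ellipsoid property and apply pigeonhole to find one cell containing $\gamma n^{d+1}$ of the $E_T$; its common sub-ellipsoid provides $E^\ast$.

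A supersaturation or dependent random choice step on the $(d+1)$-uniform hypergraph of good tuples may be needed at the first stage to guarantee that the reduction still leaves $\Omega(n^{d+1})$ good tuples whose $E_T$ fall into the bounded reference region. The hardest step I anticipate is the discretization: designing finitely many cells of bounded shape-diameter that cover a positive fraction of the $E_T$ and enjoy the common sub-ellipsoid property, with their number depending only on $d$. I note that \Cref{thm:QFH_2d} does not seem to apply as a direct black box, because the passage from a $(d+1)$-wise to a $(2d)$-wise volume hypothesis on a linear-size subfamily is not automatic; hence the approach above extracts the common ellipsoid directly from the bodies $I_T$ rather than going through a $(2d)$-wise hypothesis.
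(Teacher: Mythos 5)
There is a genuine gap at the heart of your argument. The counting skeleton is fine: if an ellipsoid $E^\ast$ of volume $\Omega_d(1)$ lies in $\gamma n^{d+1}$ of the bodies $I_T$, then the family $\cF^\ast$ of members containing $E^\ast$ satisfies $\binom{|\cF^\ast|}{d+1}\geq \gamma n^{d+1}$ and hence has linear size. But producing such an $E^\ast$ is precisely the entire content of the theorem, and the mechanism you propose cannot work as described. The space of volume-$d^{-d}$ ellipsoids is non-compact, as you note, and no Vitali-type covering or pigeonhole argument ``at the cost of a constant factor'' can confine $\Omega(n^{d+1})$ of the $E_T$ to a bounded region of bounded anisotropy: a collection of $n^{d+1}$ ellipsoids of fixed volume carries no concentration by itself (they could be pairwise disjoint, or so eccentric in $n^{d+1}$ different directions that no two share a sub-ellipsoid of comparable volume). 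The concentration has to be extracted from the hypothesis that these ellipsoids arise as inscribed ellipsoids of intersections of $(d+1)$-tuples drawn from a single family of only $n$ convex sets, and your proposal never uses this structure beyond the initial count. The phrases ``a supersaturation or dependent random choice step may be needed'' and ``the hardest step I anticipate is the discretization'' sit exactly where the proof has to happen, and no concrete argument is offered there.

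The paper's route, which you explicitly set aside, is in fact to reduce to Theorem~\ref{thm:QFH_2d}: one first proves a Weak Quantitative Colorful Helly theorem (Lemma~\ref{lemma:QCFH}) asserting that in any sufficiently large $(d+1)$-tuple of families whose colorful $(d+1)$-tuples all intersect in volume $1$, some single family contains $2d$ members intersecting in volume $\delta(d)>0$. This is proved by an induction that sparsifies one color class at a time, using the simplex-covering lemma, a hyperplane-arrangement count, and the $(d+1)$-partite Ramsey theorem, and terminates in a contradiction with the classical Colorful Helly theorem. Combining this with Erd\H{o}s--Simonovits supersaturation (Lemma~\ref{lemma:saturation}) shows that a positive fraction of all $(2d)$-tuples of $\cF$ intersect in volume $\delta$, whereupon Theorem~\ref{thm:QFH_2d} applies as a black box; so the passage from a $(d+1)$-wise to a $(2d)$-wise hypothesis, which you judged unavailable, is exactly the step the paper carries out. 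To salvage your approach you would need to supply an argument playing the role of Lemma~\ref{lemma:QCFH}; as written, your first stage is a restatement of the theorem rather than a proof of it.
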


Comparing Theorems \ref{thm:QFH_2d} and \ref{thm:QFH_d+1}, the latter gives the best possible value of $k$. However, the former gives stronger quantitative bound for the volume $v(d)$.
While we will not try to optimize our constants in the proof of Theorem \ref{thm:QFH_d+1}, careful calculations show that we can take $v(d)=2^{-2^{...2}}$, where the tower is of height $O(d)$.

\subsection{A quantitative \texorpdfstring{$(p,d+1)$}{(p,d+1)}-theorem}

We also use Theorem~\ref{thm:QFH_d+1} to establish a quantitative analog of the celebrated $(p,q)$-theorem of Alon and Kleitman \cite{AK92}. A \emph{transversal} of a family of sets $\cF$ is a set of points $X$ such that every member of $\cF$ contains an element of $X$. Then the $(p,q)$-theorem states that for every pair of integers $(p,q)$ with $p\geq q\geq d+1$, there exists $r=r(p,q)$ such that the following holds. If $\cF$ is a family of convex sets in $\mathbb{R}^d$ such that among any $p$ members there exist $q$ with a nonempty intersection, then $\cF$ has a transversal of size~$r$. Observe that the $(p,d+1)$-theorem already implies the $(p,q)$-theorem for every $p\geq q\geq d+1$.

One can naturally extend the definition of transversals to incorporate volume. For $v>0$,  a family $T$ of convex bodies of volume $v$ is a \emph{quantitative $v$-transversal} of a family $\cF$ of convex sets in $\Red$, if every element of $\cF$ contains at least one element of $T$. Sarkar, Xue, and Sober\'on \cite{SXS} proved that if $p\geq d(d+3)/2=:k$, and $\cF$ is a family of convex sets in $\Red$ such that among any $p$ elements of $\cF$ there are $k$ with intersection of volume at least 1, then $\cF$ has a $v$-transversal of size at most $r=r(p)$ with $v=d^{-d}$. Jung and Nasz\'odi \cite{JN} showed that a similar result holds for $k=3d+1$, although with worse bounds on~$v$. Our theorem shows that one can take $k=d+1$, the smallest possible value of~$k$.

\begin{theorem}[Quantitative $(p,d+1)$ theorem]\label{thm:pqsmallp}
	For every positive integer $d$  there exists $v(d)>0$, and for every integer $p \geq d+1$, there exists an integer 	$r = r(p,d)$ such that the following holds. Let $\cF$ be a finite family of convex sets in $\Red$, each of volume at least one, such that among any $p$ members of $\cF$, there exist $d+1$ whose intersection has volume at least 1. Then $\cF$ has a quantitative $v(d)$-transversal of size at most $r$.
\end{theorem}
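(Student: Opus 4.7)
The plan is to adapt the Alon--Kleitman proof of the classical $(p,q)$ theorem, substituting Theorem \ref{thm:QFH_d+1} for Kalai's fractional Helly theorem and working with containment of volume-$v(d)$ convex bodies in place of point containment.

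First, I would reduce the $(p, d+1)$-quantitative hypothesis to the hypothesis of Theorem \ref{thm:QFH_d+1}. Double-counting pairs $(P, Q)$ with $P \subseteq \cF$ of size $p$ and $Q \subseteq P$ of size $d+1$ such that $\vol\bigl(\bigcap Q\bigr) \geq 1$ yields at least $\binom{n}{d+1}/\binom{p}{d+1}$ of the $(d+1)$-tuples of $\cF$ having intersection of volume at least $1$. Theorem \ref{thm:QFH_d+1} then produces a subfamily of size $\geq \beta n$ (with $\beta = \beta(d, 1/\binom{p}{d+1})$) whose intersection has volume at least $v(d)$, giving a quantitative first selection for $\cF$.

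Next, I would bound the fractional quantitative transversal number. Define $\tau^*_{v}(\cF)$ as the LP value of fractionally covering $\cF$ by weighted convex bodies of volume $v(d)$; by LP duality, $\tau^*_v(\cF) = \sup_{\mu} \mu(\cF)$ over $\mu : \cF \to \mathbb{R}_{\geq 0}$ satisfying $\sum_{A \supseteq K} \mu(A) \leq 1$ for every body $K$ of volume $v(d)$. Given such a $\mu$, scale by a large integer $k$ to form a multifamily $\cF^*$ with $\lfloor k \mu(A)\rfloor$ copies of $A$. A case analysis on $\max_A \mu(A)$, using that each individual $A$ has volume $\geq 1$ (so $(d+1)$-subtuples of $\cF^*$ consisting of repeated copies are automatically good) together with the first-step reduction applied to the distinct underlying sets, shows $\cF^*$ satisfies the QFH hypothesis with density $c(p,d) > 0$. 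Theorem \ref{thm:QFH_d+1} then produces a body $K_0$ of volume $v(d)$ contained in a subfamily of $\cF^*$ of total $\mu$-weight at least $\beta' \mu(\cF)$; the matching constraint $\sum_{A \supseteq K_0}\mu(A) \leq 1$ forces $\mu(\cF) \leq 1/\beta' =: M(p,d)$.

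Finally, I would convert the bounded fractional transversal to an integral one via the weak $\epsilon$-net theorem. Reducing to ellipsoids (losing a factor $d^{d/2}$ in $v(d)$ via John's theorem), the fractional transversal is supported in the $O(d^2)$-dimensional parameter space of ellipsoids of volume $v(d)$. For each $A \in \cF$, the set $\cE_A$ of ellipsoids of volume $v(d)$ contained in $A$ is convex and semi-algebraic of bounded description complexity (via the support function $h_E(u) \leq h_A(u)$ expressed in center and shape parameters), so the dual set system $\{\cE_A : A \in \cF\}$ has VC dimension bounded by a function of $d$. The standard $\epsilon$-net theorem with $\epsilon = 1/M$ then yields the desired transversal $T$ of size $r = r(p,d) = O(M \log M)$.

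The hard part will be the weighted/multifamily step. When $\mu$ is concentrated on few sets, most $(d+1)$-subtuples of $\cF^*$ consist of repeated copies of a single $A$, whose intersection has volume $\geq 1$ (good); when $\mu$ is spread out, $p$ iid samples from $\mu/\mu(\cF)$ are distinct with positive probability, so the $(p, d+1)$-quantitative condition on distinct tuples of $\cF$ yields the required density. Balancing the two regimes so both contribute a density bounded below by an absolute constant $c(p,d) > 0$, and then propagating this through the scaling to $\cF^*$ correctly, is the technical crux of the adaptation.
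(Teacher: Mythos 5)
Your first two steps follow the paper's route: the paper likewise passes to the LP dual (the quantitative fractional $v$-matching number), takes an optimal rational matching, blows it up into a multiset $\tilde{\cF}$, and feeds the result into Theorem \ref{thm:QFH_d+1}. The ``technical crux'' you worry about in the weighted step is handled cleanly there: any $d(p-1)+1$ members of the multiset contain either $d+1$ copies of a single set (automatically a good $(d+1)$-tuple, since each set has volume at least one) or $p$ distinct sets, so no balancing of concentration regimes is needed --- one simply runs the double count with $\tilde{p}=d(p-1)+1$ in place of $p$. The reduction of the $(p,d+1)$ hypothesis to the QFH hypothesis and the bound $\tau^{*}_v(\cF)\leq 1/\beta$ via the matching constraint are correct and match the paper.

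The genuine gap is in your last step. The dual set system $\{\cE_A : A\in\cF\}$ on the ground set of volume-$v$ ellipsoids does \emph{not} have bounded VC dimension: each $\cE_A$ is, in a suitable parametrization, an essentially arbitrary convex subset of the parameter space, because $A$ is an arbitrary convex body rather than a semialgebraic set of bounded description complexity, so your ``bounded description complexity'' claim fails. Concretely, place congruent balls of volume $v$ centered at points in strictly convex position and far apart; every subset of them is exactly the set of these balls contained in a slight enlargement of the convex hull of that subset, so arbitrarily large collections of ellipsoids are shattered. This is the same obstruction that forces Alon and Kleitman to use the \emph{weak} epsilon-net theorem of Alon, B\'ar\'any, F\"uredi and Kleitman in the classical $(p,q)$ theorem instead of VC-based nets. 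Accordingly, the paper invokes a quantitative weak epsilon-net theorem (Theorem \ref{thm:weakepsilonnet}, due to Jung and Nasz\'odi), whose transversal consists of new ellipsoids not drawn from the support of the fractional transversal; this is a substantive ingredient that your $O(M\log M)$ standard epsilon-net bound silently, and incorrectly, replaces.
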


Alon, Kalai, Matou\v{s}ek, and Meshulam~\cite{Alon02} showed that fractional Helly-type results imply $(p,q)$-theorems in abstract convexity spaces (see \cite{Alon02} for precise definitions). However, their results are not directly applicable in the quantitative setting. Despite this, the ideas of their proofs can be adapted to deal with quantitative questions, as was done by Jung and Nasz\'odi in~\cite{JN}. We follow the same path to prove Theorem~\ref{thm:pqsmallp}. For a general treatment of combinatorial proofs in the quantitative Helly-type setting, see~\cite{Jung}.

\subsection{Diameter Helly theorems}

B\'ar\'any, Katchalski and Pach \cite{BKP} proved a counterpart of their Quantitative Volume theorem, where the sizes of the intersections are measured by their diameter instead of their volume. They showed that given a family $\mathcal{F}$ of convex bodies in $\mathbb{R}^d$, if the diameter of the intersection of any $2d$ elements is at least $1$, then the diameter of the intersection of all the convex bodies in the family is $\Omega_d(1)$. Also, this is sharp as $2d$ cannot be replaced by any smaller number. The following fractional variant was proved by Sober\'on \cite{Soberon}. If  $\cF$ is a family of $n$ convex sets in $\mathbb{R}^d$ such that at least $\alpha$-fraction of the $(2d)$-tuples of members of $\cF$ have an intersection of diameter at least $1$, then one can select at least $\beta n$ elements of  $\cF$ whose intersection has diameter at least $\Omega_d(1)$, for some $\beta=\beta(d,\alpha)>0$. 

As Travis Dillon (personal communication) pointed out, one can use our proof method to show the following improvement.

\begin{theorem}\label{thm:QFHdiameter_d+1}
For every positive integer $d$, there exists $c(d)>0$, and for every $\alpha\in (0,1)$, there exists $\beta=\beta(d,\alpha)>0$ such that the following holds. Let $\cF$ be a family of $n$ convex sets in $\mathbb{R}^d$ such that at least $\alpha \binom{n}{d+1}$ of the $(d+1)$-tuples of members of $\cF$ have an intersection of diameter at least $1$. Then one can find at least $\beta n$ elements of  $\cF$ whose intersection has diameter at least $c(d)$. 
\end{theorem}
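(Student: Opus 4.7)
The plan is to follow the strategy of the volume case (Theorem~\ref{thm:QFH_d+1}), replacing the ``intersection of volume $\geq 1$'' condition with ``intersection of diameter $\geq 1$'', which means the intersection contains a unit segment. The high-level idea is to reduce, via pigeonholing on the direction of these segments, to Kalai's classical Fractional Helly theorem applied to a cleverly defined family of auxiliary convex sets.

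Concretely, I would first fix a small $\epsilon=\epsilon(d)>0$ and take an $\epsilon$-net $N \subseteq S^{d-1}$ of size $O_d(1)$. Each good tuple $\sigma$ contains a unit segment whose direction lies within angle $\epsilon$ of some $u \in N$; by pigeonhole, a $(\alpha/|N|)$-fraction of good tuples share a common direction $u$, and after rotation we may assume $u = e_d$. The key construction is then the convex auxiliary set $A^\vee := \tilde A \cap (\tilde A - \ell\, e_d)$, where $\tilde A := A + D_\epsilon$, $D_\epsilon \subseteq e_d^\perp$ is a small plate of radius $\epsilon$, and $\ell=\ell(d,\epsilon) > 0$ is a small constant. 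A point $x\in A^\vee$ certifies that $A$ contains points within $\epsilon$ (in the $e_d^\perp$-direction) of both $x$ and $x + \ell e_d$. The key claim is that for each of the pigeonholed good tuples, $\bigcap_i A_i^\vee \neq \emptyset$, because the $\epsilon$-tilted unit segment in $\bigcap_i A_i$ lies within $\epsilon$ of an honest vertical segment of length close to $1$, and therefore meets each $\tilde A_i$ at both its endpoints after vertical-shift.

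Applying Kalai's Fractional Helly to $\{A^\vee\}_{A\in\cF}$ in $\mathbb{R}^d$ yields a subfamily $\cG\subseteq\cF$ with $|\cG|\geq\beta n$ and a common point $x^* \in \bigcap_{A\in\cG} A^\vee$. By construction, each $A\in\cG$ then contains two points $a_A,b_A\in A$ with $a_A$ within $\epsilon$ (horizontally) of $x^*$ and $b_A$ within $\epsilon$ (horizontally) of $x^*+\ell e_d$; hence by convexity $[a_A,b_A]\subseteq A$. The main obstacle is that these near-vertical segments $[a_A,b_A]$ lie in slightly different positions for different $A\in\cG$ and a priori need not share a common sub-segment, so we cannot immediately conclude that $\bigcap_{A\in\cG}A$ has large diameter. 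I expect this to be handled by one further pigeonhole on the positions of $a_A$ and $b_A$ within the small neighborhoods of $x^*$ and $x^*+\ell e_d$ (on a $\delta$-grid of size $O_d(1)$), reducing to a subfamily $\cG'$ of size $\Omega(\beta n)$ where the segment endpoints cluster to within $\delta$; a quantitative convex-geometric argument (possibly one more application of Kalai's theorem to a secondary family of ``cluster-indicator'' convex sets) should then yield a common segment of length $\Omega(\ell)$ inside $\bigcap_{A\in\cG'}A$. The final constant $c(d)$ will emerge from a careful tuning of $\epsilon,\ell,\delta$ as functions of $d$.
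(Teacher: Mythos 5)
There is a genuine gap at the step you yourself flag as ``the main obstacle,'' and it is not repairable by further pigeonholing. After applying Kalai's Fractional Helly theorem to the auxiliary sets $A^\vee$, all you know is that each $A$ in the selected subfamily $\cG$ \emph{individually} contains a near-vertical segment of length about $\ell$ whose endpoints lie in small neighborhoods of $x^*$ and $x^*+\ell e_d$. This conclusion is strictly weaker than what the theorem asks for: it is consistent with $\bigcap_{A\in\cG}A=\emptyset$. For instance, each $A\in\cG$ could be (an $\epsilon'$-thin neighborhood of) a vertical segment $[(t_A,0),(t_A,\ell)]$ in $\mathbb{R}^2$ with the $t_A$ pairwise distinct points of $[-\epsilon,\epsilon]$; every such $A$ contains a segment with endpoints as close to $x^*=(0,0)$ and $x^*+\ell e_2$ as you like, yet the sets are pairwise disjoint. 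Pigeonholing the endpoints onto a $\delta$-grid does not change this picture at any scale $\delta>0$, so no common sub-segment, and indeed no common point, can be extracted from the information you have retained. The proof has to carry forward some intersection information about tuples \emph{inside} the subfamily, which the single application of Fractional Helly to the $A^\vee$'s discards. (The first half of your argument --- the $\epsilon$-net on directions and the claim that good tuples give $\bigcap_i A_i^\vee\neq\emptyset$ --- is essentially fine, modulo replacing the horizontal plate $D_\epsilon$ by a small ball so that the slightly tilted segment's endpoint lands in $\tilde A_i-\ell e_d$.)

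The paper's route is different and avoids this trap: it first proves a ``weak quantitative colorful Helly'' statement (via Lemma~\ref{lemma:sparsify}, with Lemma~\ref{lemma:smallest_simplex} replaced by Gale's simplex lemma for the diameter), showing that in any large complete $(d+1)$-partite system of good tuples some $2d$ sets must intersect in diameter $\Omega_d(1)$; hypergraph supersaturation (Lemma~\ref{lemma:saturation}) then converts the $\alpha\binom{n}{d+1}$ good $(d+1)$-tuples into a positive fraction of $(2d)$-tuples intersecting in diameter $\Omega_d(1)$, at which point Sober\'on's fractional diameter Helly theorem for $(2d)$-tuples finishes the proof. If you want to salvage your approach, you would need to replace the final step by an argument of this boosting type rather than by a direct appeal to Kalai's theorem on derived sets.
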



Furthermore, an analog of our Theorem~\ref{thm:pqsmallp}, a $(p,d+1)$-theorem for the diameter can be also established.

\begin{theorem}\label{thm:diamter_pq}
	For every positive integer $d$  there exists $v(d)>0$, and for every integer $p \geq d+1$, there exists an integer 	$r = r(p,d)$ such that the following holds. Let $\cF$ be a finite family of convex sets in $\Red$, each of diameter at least one, such that among any $p$ members of $\cF$, there exist $d+1$ whose intersection has diameter at least 1. Then there exists a family $\mathcal{C}$ of $r$ convex sets of diameter at least $v(d)$ such that any member of $\cF$ contains at least one element of $\mathcal{C}$.
\end{theorem}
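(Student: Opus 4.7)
The proof of Theorem~\ref{thm:diamter_pq} proceeds in strict analogy with that of Theorem~\ref{thm:pqsmallp}, following the Alon-Kalai-Matou\v{s}ek-Meshulam framework~\cite{Alon02} in the quantitative form used by Jung and Nasz\'odi~\cite{JN}. The ellipsoids of volume at least $v(d)$ that form the transversal in the volume case are replaced here by segments of length at least $v(d):=c(d)$, where $c(d)$ is the constant produced by Theorem~\ref{thm:QFHdiameter_d+1}. Every invocation of the Quantitative Fractional Helly theorem for volume is replaced by an invocation of Theorem~\ref{thm:QFHdiameter_d+1}, and every reference to ``volume'' becomes a reference to ``diameter''. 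The analogue of the fact that a large-volume convex set contains a large ellipsoid (John's theorem) is the trivial observation that any convex set of diameter $\geq D$ contains a segment of length $\geq D$.

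The steps are as follows. A standard double counting argument shows that the $(p,d+1)$-condition forces at least an $\alpha := 1/\binom{p}{d+1}$ fraction of $(d+1)$-tuples of $\cF$ to have intersection of diameter at least $1$. Applying Theorem~\ref{thm:QFHdiameter_d+1} then yields a subfamily of linear size whose common intersection has diameter at least $c(d)$, and hence contains a segment of length $c(d)$. We next set up the linear program whose primal variables are nonnegative weights $w_s$ on segments $s \subset \Red$ of length at least $v(d)$, subject to $\sum_{s \subseteq F} w_s \geq 1$ for every $F \in \cF$, with objective $\min \sum_s w_s$. LP duality, combined with the previous fractional Helly step applied to the weighted multi-family encoded by an optimal dual solution, bounds the value of this LP by $O_{p,d}(1)$.

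To round this fractional transversal by segments into an integral one of bounded size, we apply the weak $\epsilon$-net theorem to the set system whose ground set is the set of admissible segments and whose ranges are $\{s : s \subseteq F\}$ for $F \in \cF$. Since a segment is contained in a convex body if and only if both of its endpoints are, this system is the twofold product of the system of convex sets of $\Red$ (which has VC dimension $O(d)$), and therefore has VC dimension $O(d \log d)$. Hence the standard $\epsilon$-net theorem converts the fractional cover of weight $O_{p,d}(1)$ into an integral cover of size $r(p,d) = O_{p,d}(1)$ by segments of length $\geq v(d)$, yielding the required family $\mathcal{C}$. The main technical point — verifying that the VC- and LP-duality machinery from~\cite{JN} carries over unchanged to the diameter setting — is the same one already resolved in the volume case; since every step above is structurally identical to its volume counterpart, we expect no new obstacle.
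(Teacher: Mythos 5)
Your overall strategy matches the paper's: reduce to Theorem~\ref{thm:QFHdiameter_d+1}, bound the fractional transversal number (by segments of length $v(d)$) via LP duality and the Alon--Kleitman multiset argument, and then round the fractional transversal to an integral one. The first two steps are essentially right, modulo the small point that the fractional Helly theorem must be applied to the multiset $\tilde{\cF}$ encoding an optimal rational fractional matching, where the $(p,d+1)$-condition degrades to a $(d(p-1)+1,\,d+1)$-condition rather than giving $\alpha=1/\binom{p}{d+1}$ for $\cF$ itself; you gesture at applying the Helly step to the weighted multi-family, so this is a presentational quibble.

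The rounding step, however, contains a genuine error. You claim that the set system of convex sets in $\Red$ has VC dimension $O(d)$, deduce that the system of admissible segments contained in members of $\cF$ has bounded VC dimension, and then invoke the standard $\varepsilon$-net theorem. This is false: for $d\geq 2$, convex sets shatter every point set in convex position, so the system has \emph{infinite} VC dimension, and the same holds for the segment system (take many parallel segments of length $v(d)$ whose midpoints are in convex position and mutually far apart; convex hulls of subsets shatter them). This is exactly why Alon and Kleitman needed the \emph{weak} $\varepsilon$-net theorem of Alon, B\'ar\'any, F\"uredi and Kleitman, which is not a VC-dimension statement, and why the quantitative setting requires a quantitative weak $\varepsilon$-net theorem: Theorem 5.4 of \cite{JN} (Theorem~\ref{thm:weakepsilonnet} above) in the volume case, and its diameter analogue, Theorem 4.4 of \cite{Soberon}, in the present case. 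The paper's proof uses precisely this substitution; replacing your VC/standard-net argument with Sober\'on's quantitative weak $\varepsilon$-net theorem for the diameter closes the gap and recovers the intended proof.
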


See Section \ref{sect:diameter} for a brief outline of the proofs.


\section{Preliminaries}

The proofs of our theorems rely on a combination of known results from convex geometry and extremal hypergraph theory, which we collect here for the reader's convenience. An important strengthening of Helly's theorem is the Colorful Helly theorem, first proved by Lov\'asz (see B\'ar\'any~\cite{Barany82}). 

\begin{lemma}[Colorful Helly theorem]\label{lemma:colorful}
Let $\cF_1,\dots,\cF_{d+1}$ be families of convex sets in $\mathbb{R}^d$ such that $A_1\cap\dots \cap A_{d+1}\neq \emptyset$ for every $(A_1,\dots,A_{d+1})\in \cF_1\times\dots\times \cF_{d+1}$. Then there exists $i\in [d+1]$ such that $\bigcap_{A\in \cF_i}A\neq \emptyset$.
\end{lemma}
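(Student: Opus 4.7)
The plan is to adapt Bárány's classical extremal argument. First, I would reduce to the case in which each $\cF_i$ is a finite family of compact convex sets by a standard compactness/exhaustion argument, and then argue by contradiction: assume that $\bigcap_{A \in \cF_i} A = \emptyset$ for every $i \in [d+1]$.

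Let $\Omega$ denote the set of pairs $(\sigma, p)$ with $\sigma = (A_1, \ldots, A_{d+1}) \in \cF_1 \times \cdots \times \cF_{d+1}$ and $p \in A_1 \cap \cdots \cap A_{d+1}$; by the hypothesis $\Omega$ is nonempty. I would select a pair $(\sigma^*, p^*) \in \Omega$, with $\sigma^* = (A_1^*, \ldots, A_{d+1}^*)$, that minimizes the first coordinate $p^*_1$ (tilting the coordinate system slightly to break ties). For each $i \in [d+1]$, the assumption $\bigcap \cF_i = \emptyset$ supplies a set $B_i \in \cF_i$ with $p^* \notin B_i$; strictly separating $\{p^*\}$ from the closed convex set $B_i$ yields a unit vector $n_i \in \mathbb{R}^d$ with $n_i \cdot p^* < n_i \cdot x$ for every $x \in B_i$.

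Applying the colorful hypothesis to the all-swapped tuple $(B_1, \ldots, B_{d+1})$ produces a common point $q$ satisfying $n_i \cdot (q - p^*) > 0$ for every $i$. This rules out $0 \in \mathrm{conv}(n_1, \ldots, n_{d+1})$, since any relation $\sum \lambda_i n_i = 0$ with $\lambda_i \geq 0$ not all zero would force the strict inequality $0 = \sum \lambda_i n_i \cdot (q - p^*) > 0$. Because the $n_i$ form $d+1$ vectors in $\mathbb{R}^d$ they are linearly dependent, and together with $0 \notin \mathrm{conv}(n_i)$, Radon's lemma partitions $[d+1] = I \sqcup J$ into nonempty parts with positive weights $\mu_k$ so that $\sum_{i \in I} \mu_i n_i = \sum_{j \in J} \mu_j n_j$. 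For each $k$, letting $q_k$ be a common point of the one-swap tuple $(A_1^*, \ldots, B_k, \ldots, A_{d+1}^*)$ (which exists by hypothesis), one has $n_k \cdot q_k > n_k \cdot p^*$ by the separation and $(q_k)_1 \geq p^*_1$ by the extremality of $p^*_1$. I would then exploit the Radon split to combine the $q_k$'s into a point of $\Omega$ whose first coordinate is strictly less than $p^*_1$, the desired contradiction.

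The main obstacle is the final synthesis: converting the Radon-balanced relation among the $n_i$'s into an honest point in some colorful intersection with smaller first coordinate. The key input is the normal-cone characterization of $p^*$ as the minimizer of $x_1$ on $A_1^* \cap \cdots \cap A_{d+1}^*$, which in conjunction with the convex combination of the one-swap witnesses $q_k$ weighted by the $\mu_k$'s should produce the required descent. Alternatively, if this last step turns out to be resistant, I would fall back on the well-known derivation of the Colorful Helly theorem from the Colorful Carathéodory theorem via LP duality, which entirely bypasses the extremal-point choice.
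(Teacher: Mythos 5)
The paper does not prove this lemma at all: it is quoted as a known black box, attributed to Lov\'asz (via B\'ar\'any~\cite{Barany82}). So the only question is whether your self-contained argument is sound, and as written it is not. The fatal point is the final ``synthesis'' step, which you yourself flag as the main obstacle: it is not merely resistant, it cannot work with your extremal choice. You take $p^*$ to be the \emph{global} minimizer of the first coordinate over all points of all colorful intersections, and you correctly observe that every one-swap witness $q_k$ then satisfies $(q_k)_1\geq p^*_1$. But then no convex combination $\sum_k\mu_k q_k$ can have first coordinate strictly below $p^*_1$, so the promised descent is arithmetically impossible. Moreover, such a combination over $k\in I$ is only guaranteed to lie in $\bigcap_{j\notin I}A_j^*$ (since $q_k\in A_j^*$ only for $j\neq k$), not in any colorful intersection, so it would not even be an element of $\Omega$. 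The intermediate observations ($0\notin\mathrm{conv}(n_1,\dots,n_{d+1})$, the sign-split of a linear dependence --- which, incidentally, is Stiemke/Gordan-type reasoning rather than Radon, since the $n_i$ are only linearly, not affinely, dependent) are correct but lead nowhere from this starting point.

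The underlying issue is that a pure minimum is the wrong extremal quantity. Lov\'asz's argument is a max--min: for each colorful tuple $\sigma$ let $y(\sigma)$ be the lowest point of $\bigcap\sigma$ (in a generic direction, with ties broken lexicographically), and choose $\sigma^*$ \emph{maximizing} the height of $y(\sigma)$. The contradiction is then not ``find a lower point'' but ``show $p^*=y(\sigma^*)$ lies in every member of some color class'': since $-e_1$ lies in the normal cone of $\bigcap\sigma^*$ at $p^*$, Carath\'eodory for cones lets you drop one color $i$ so that $p^*$ is still the lowest point of $\bigcap_{j\neq i}A_j^*$; swapping any $B\in\cF_i$ into position $i$ gives an intersection contained in $\bigcap_{j\neq i}A_j^*$, whose lowest point is therefore at least as high as $p^*$, while maximality of $\sigma^*$ forces it to be at most as high, whence it equals $p^*$ and $p^*\in B$. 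Your fallback (deriving the statement from the Colorful Carath\'eodory theorem by separation/duality) is a legitimate known route, but it is only named, not carried out, so as it stands the proposal does not constitute a proof.
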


We highlight that quantitative versions of the Colorful Helly theorem are also studied by Dam\'asdi, F\"oldv\'ari, and Nasz\'odi \cite{DFN}. They prove that if one has $3d$ families, and any colorful selection of $2d$ elements intersect in volume at least 1, then one of the families intersect in large volume. Unfortunately, this is not applicable in our case, as it is important for us to have exactly $d+1$ families. Connections between colorful and fractional Helly type theorems have been already known, Holmsen \cite{H} shows implications between the two in an abstract setting. 

We also apply the Quantitative Volume theorem as a black-box. The following sharp bounds on the volume are proved by Nasz\'odi \cite{Nasz16} and Brazitikos \cite{Braz}.

\begin{lemma}[Quantitative Helly theorem]\label{lemma:quantitative}
Let $\cF$ be a family of convex sets in $\mathbb{R}^d$ such that $$\vol(A_1\cap\dots \cap A_{2d})\geq 1$$ for every $A_1,\dots,A_{2d}\in \cF$. Then $\vol(\bigcap_{A\in \cF}A)\geq d^{-cd}$ for some absolute constant $c>0$.
\end{lemma}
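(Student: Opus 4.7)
The plan is to follow the B\'ar\'any--Katchalski--Pach strategy, applying Fritz John's theorem on maximum-volume inscribed ellipsoids. Setting $K := \bigcap_{A \in \cF} A$, Helly's theorem (applied to the positive-volume, hence nonempty, $2d$-fold intersections) gives $K \neq \emptyset$, and a standard compactness reduction lets me assume $\cF$ is finite. I would apply an affine map $T$ sending the maximum-volume ellipsoid inscribed in $K$ to the unit ball $B^d$; John's theorem then gives $B^d \subseteq T(K) \subseteq d \cdot B^d$. Writing $J := |\det T|$ and $\omega_d := \vol(B^d)$, the hypothesis in the transformed picture reads: every $2d$-fold intersection in $T(\cF)$ has volume at least $J$. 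Since $\vol(T(K)) \geq \omega_d$ and $\vol(K) = \vol(T(K))/J$, it suffices to establish $J \leq d^{O(d)} \omega_d$, which yields $\vol(K) \geq d^{-O(d)}$.

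To upper bound $J$, I would exhibit $2d$ members of $T(\cF)$ whose intersection is contained in a ball of radius $O(d)$ around the origin. John's theorem supplies contact points $u_1, \dots, u_m \in \partial T(K) \cap S^{d-1}$ with positive weights satisfying $\sum_i c_i u_i u_i^T = I$ and $\sum_i c_i u_i = 0$. At each $u_i$, the containment $B^d \subseteq T(K)$ forces the common supporting hyperplane of $T(K)$ and $B^d$ at $u_i$ to be $\{x : \langle x, u_i\rangle = 1\}$; since $u_i \in \partial T(K) = \partial \bigcap_{A' \in T(\cF)} A'$ and $\cF$ is finite, some $A'_i \in T(\cF)$ has $u_i$ on its boundary, forcing its supporting hyperplane at $u_i$ to coincide, and hence $A'_i \subseteq H_i := \{x : \langle x, u_i\rangle \leq 1\}$. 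The polar of $P := \bigcap_i H_i$ is $\mathrm{conv}(u_i)$, which by the centered John decomposition contains $(1/d)B^d$; so $P \subseteq d \cdot B^d$ and $\vol(P) \leq d^d \omega_d$. If $m \leq 2d$, padding the $A'_i$ with arbitrary sets to $2d$ members of $T(\cF)$ gives an intersection contained in $P$, and the $2d$-hypothesis yields $J \leq d^d \omega_d$.

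The main obstacle is the case $m > 2d$, which can occur once $d \geq 4$ since John allows up to $d(d+3)/2$ contact points. Taking only $2d$ of the contact-point sets is delicate because their $2d$-faceted polytope may be unbounded, or much larger than $d \cdot B^d$. I would overcome this by a sparsification of the John decomposition: extract $k \leq 2d$ contact points $u_{i_1},\dots,u_{i_k}$ with positive weights $c'_j$ such that $\sum_j c'_j u_{i_j} u_{i_j}^T \succeq c\, I$ and $\sum_j c'_j u_{i_j} = 0$, which keeps $0$ in the interior of $\mathrm{conv}(u_{i_j})$ and keeps the associated polytope inside a ball of radius polynomial in $d$. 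Padding to exactly $2d$ members of $T(\cF)$ and applying the hypothesis then yields $J \leq d^{O(d)} \omega_d$, and hence $\vol(K) \geq d^{-O(d)}$, matching $d^{-cd}$ for some absolute $c > 0$.
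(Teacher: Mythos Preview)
The paper does not prove this lemma at all: it is quoted as a black-box, with the sharp bound $d^{-cd}$ attributed to Nasz\'odi~\cite{Nasz16} and Brazitikos~\cite{Braz}. So there is no ``paper's proof'' to compare against; your proposal is an attempt to reproduce the cited result.

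Your outline is the correct one and is essentially Nasz\'odi's strategy: normalize so the John ellipsoid of $K$ is $B^d$, pick for each John contact point $u_i$ a member of $T(\cF)$ supported at $u_i$ by the tangent halfspace $H_i$, and bound the volume of the resulting polytope. The setup and the case $m\le 2d$ are fine. The decisive step, however, is precisely the one you state without proof: the sparsification of the John decomposition down to at most $2d$ contact points while retaining enough isotropy to keep the associated polytope inside a ball of radius $d^{O(1)}$. This is the entire technical content of~\cite{Nasz16} (and the refinement in~\cite{Braz} via Brascamp--Lieb), and it is not a routine fact. In particular, the specific claim you make---that one can choose $k\le 2d$ of the $u_i$ and weights $c'_j>0$ with simultaneously $\sum_j c'_j u_{i_j}u_{i_j}^{T}\succeq cI$ and $\sum_j c'_j u_{i_j}=0$---is not obviously available off the shelf: standard spectral sparsification handles the first condition, but the centering constraint is an additional linear condition, and na\"ive lifting to $\mathbb{R}^{d+1}$ costs extra vectors. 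Nasz\'odi's actual argument selects $2d$ halfspaces whose intersection has volume at most $d^{O(d)}\omega_d$ via a somewhat different mechanism. As written, your proposal identifies the right obstacle but defers its resolution to an unproved assertion that is exactly the heart of the theorem.
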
 

The following result tells us that any convex body $K$ in $\mathbb{R}^d$ is contained in a simplex of volume $O_d(\vol(K))$. Recall that a \emph{convex body} is a compact convex set with nonempty interior. While the next lemma gives the best known bound on the constant hidden in the $O_d(.)$ notation, any constant depending only on $d$ would serve our purposes. 

\begin{lemma}[Galicer, Merzbacher, and Pinasco \cite{GMP}]\label{lemma:smallest_simplex}
Let $K$ be a convex set in $\mathbb{R}^d$. Then there exists a simplex of volume at most $(cd)^{d/2}\cdot \vol(K)$ containing $K$ for some absolute constant $c>0$.
\end{lemma}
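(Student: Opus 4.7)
The plan is to reduce to the case where $K$ is placed in a canonical position via affine invariance, and then construct a containing simplex directly from the geometric data furnished by John's theorem. Since the inequality $\vol(S)\leq (cd)^{d/2}\vol(K)$ is invariant under affine transformations (both volumes scale by $|\det T|$), I may assume without loss of generality that $K$ is in John's position: the unit Euclidean ball $B_2^d$ is the maximum-volume ellipsoid inscribed in $K$. Fritz John's theorem then guarantees $B_2^d\subseteq K\subseteq d\cdot B_2^d$ and provides contact points $u_1,\ldots,u_m\in S^{d-1}\cap\partial K$ together with positive weights $c_i$ satisfying $\sum_i c_iu_i=0$ and $\sum_i c_iu_iu_i^{\top}=I_d$.

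The key geometric observation is that for each $i$ the halfspace $H_i=\{x:\langle x,u_i\rangle\leq 1\}$ contains $K$, because the hyperplane $\{\langle x,u_i\rangle=1\}$ is tangent to $B_2^d$ at $u_i$ and therefore must support $K$ from outside (otherwise one could slightly elongate $B_2^d$ along $u_i$ into a larger-volume ellipsoid still contained in $K$, contradicting John's extremal property). Consequently, for any $(d+1)$-subset of contact points $u_{i_1},\ldots,u_{i_{d+1}}$ whose convex hull contains the origin in its interior, the intersection $S=H_{i_1}\cap\cdots\cap H_{i_{d+1}}$ is a bounded simplex containing $K$. The existence of such a subset follows from Carath\'eodory's theorem applied to the identity $\sum c_iu_i=0$, together with the fact that the contact points affinely span $\mathbb{R}^d$.

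To estimate $\vol(S)$, I would first express the volume of $S$ in closed form in terms of the Gram determinant of $u_{i_1},\ldots,u_{i_{d+1}}$ (equivalently, use the polar relation $S^{\circ}=\mathrm{conv}(u_{i_1},\ldots,u_{i_{d+1}})$). Then I would \emph{average} over random $(d+1)$-subsets drawn with probabilities proportional to $c_{i_1}\cdots c_{i_{d+1}}$. Applying the reverse Brascamp-Lieb inequality of Barthe to the John decomposition $\sum c_iu_iu_i^{\top}=I_d$ should convert this frame identity into a sharp bound on the expected Gram determinant, and hence into an expected simplex volume of at most $(cd)^{d/2}\vol(B_2^d)\leq (cd)^{d/2}\vol(K)$. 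Selecting any realisation at least as good as the average then produces the required deterministic circumscribing simplex.

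The main obstacle is precisely this last averaging step. A purely deterministic choice of $d+1$ contact points can easily yield an elongated simplex of volume ratio $\Theta(d^d)$, and only a Brascamp-Lieb / Barthe-type argument seems capable of turning the frame identity $\sum c_iu_iu_i^{\top}=I_d$ into the sharp expected-determinant estimate that brings the exponent down from $d$ to $d/2$. Verifying that the extremal configuration is indeed the regular simplex inscribed in $B_2^d$---the one realising the factor $(cd)^{d/2}$---will be the technical heart of the argument. A potential alternative route, dualising and constructing a large inscribed simplex in the polar body $K^{\circ}$ via a random point process and then transferring back via Blaschke-Santal\'o, appears comparably delicate and would rely on similarly deep analytic inputs.
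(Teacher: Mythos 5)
First, note that the paper does not prove this lemma at all: it is imported as a black box from Galicer, Merzbacher and Pinasco \cite{GMP}, and the authors explicitly remark that any bound of the form $f(d)\cdot\vol(K)$ would serve their purposes (such a weaker bound follows quickly from Lemma \ref{lemma:ellipsoid}: $K\subseteq d\cdot E$ for the John ellipsoid $E\subseteq K$, and an ellipsoid sits inside a simplex of volume $d^{O(d)}$ times its own). So there is no in-paper argument to compare yours against; what you are attempting is a proof of the cited result itself.

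Your outline does follow the general strategy of \cite{GMP} --- a random circumscribing simplex cut out by tangent halfspaces at contact points of a John-type isotropic decomposition --- but as written it has a genuine gap exactly at the quantitative core, plus a structural problem. Writing $T=\mathrm{conv}(u_{i_1},\dots,u_{i_{d+1}})$ and $S=H_{i_1}\cap\dots\cap H_{i_{d+1}}=T^{\circ}$, the volume of $S$ is \emph{not} a function of the Gram determinant of the chosen contact points alone: if $\lambda_1,\dots,\lambda_{d+1}$ are the barycentric coordinates of the origin in $T$, then $\vol(S)\vol(T)=\bigl((d!)^2\prod_j\lambda_j\bigr)^{-1}$. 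Hence you must bound $\vol(T)\cdot\prod_j\lambda_j$ from below, i.e.\ you need the origin to be \emph{quantitatively} deep inside $T$, not merely in its interior; Carath\'eodory gives no such control, and a large-volume $T$ with the origin near a facet still yields an enormous $S$. Relatedly, $\vol(S)=+\infty$ on the event that $0\notin\mathrm{int}(T)$, which your weighted random selection does not exclude, so ``pick a realisation at least as good as the average of $\vol(S)$'' is vacuous until you have shown $\mathbb{E}[\vol(S)]<\infty$; and if you instead lower-bound $\mathbb{E}\bigl[\vol(T)\prod_j\lambda_j\bigr]$, Jensen's inequality points the wrong way for converting that into an upper bound on the minimum of $\vol(S)$. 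Finally, the step you yourself flag --- that Barthe's inverse Brascamp--Lieb inequality ``should'' convert the frame identity $\sum_i c_iu_iu_i^{\top}=I_d$ into the expected-determinant estimate that produces the exponent $d/2$ rather than $d$ --- is precisely where all the work of \cite{GMP} lives and is not carried out here. As it stands the proposal is an honest plan rather than a proof.
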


The next lemma provides another approximation of convex bodies with the help of ellipsoids. Its proof is an easy consequence of John's ellipsoid theorem \cite{John}.

\begin{lemma}\label{lemma:ellipsoid}
Let $K\subset \mathbb{R}^d$ be a convex body and let $E$ be the largest volume ellipsoid in $K$. If $E$ is centered at the origin, then $K\subset d\cdot E$. 
\end{lemma}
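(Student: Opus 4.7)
The plan is to reduce the claim to the classical form of John's theorem through an affine normalization, and then invoke the contact-point decomposition to extract the constant~$d$.

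First I would apply the unique invertible linear map $T$ that sends the given ellipsoid $E$ (centered at the origin) to the Euclidean unit ball $B=B^d$. Since $T$ preserves inclusion, volume ratios, and the property of being the maximum-volume inscribed ellipsoid, $B$ becomes the largest ellipsoid contained in the convex body $K':=T(K)$. The desired conclusion $K\subseteq dE$ is then equivalent to $K'\subseteq dB$, so it suffices to prove the statement in this normalized setting.

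Next, I would invoke John's theorem to obtain the contact-point decomposition: there exist unit vectors $u_1,\dots,u_m\in\partial B\cap\partial K'$ and positive weights $c_1,\dots,c_m$ satisfying $\sum_i c_i u_i=0$ and $\sum_i c_i u_i u_i^{\top}=I$, so that $\sum_i c_i=d$ by taking traces. Because $B\subseteq K'$ touches $\partial K'$ at each $u_i$, the hyperplane tangent to $B$ at $u_i$ supports $K'$; hence $\langle x,u_i\rangle\le 1$ for every $x\in K'$.

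The final step is a short computation: for any $x\in K'$, applying the identity $I=\sum_i c_i u_i u_i^{\top}$ to $x$ and subtracting $\sum_i c_i u_i=0$ produces a representation $x=-\sum_i\alpha_i u_i$ with nonnegative weights $\alpha_i:=c_i(1-\langle x,u_i\rangle)$ whose sum equals $d$; the triangle inequality together with $|u_i|=1$ then gives $|x|\le d$, i.e.\ $x\in dB$. The main obstacle is really just quoting John's contact-point characterization correctly — everything after that is a three-line calculation, and no delicate estimate is required.
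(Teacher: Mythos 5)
Your proof is correct: the linear normalization, the John contact-point decomposition with $\sum_i c_i = d$, and the computation $x=-\sum_i c_i(1-\langle x,u_i\rangle)u_i$ giving $|x|\le d$ are all sound. The paper omits the proof entirely, remarking only that the lemma is an easy consequence of John's theorem, and your argument is exactly the standard one being alluded to.
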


The following result is a simple exercise, and we omit its proof. In particular, we only use the trivial result that the number of regions $n$ hyperplanes cut $\mathbb{R}^d$ into is bounded by a function of $n$ and $d$.  

\begin{lemma}\label{lemma:hyperplane_cutting}
The maximum number of regions $n$ hyperplanes cut $\mathbb{R}^d$ into is $\sum_{k=0}^{d}\binom{n}{k}\leq n^d$.
\end{lemma}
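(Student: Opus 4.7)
The plan is to proceed by double induction on the number of hyperplanes $n$ and the ambient dimension $d$. I will write $R(n,d)$ for the maximum number of full-dimensional regions in any arrangement of $n$ hyperplanes in $\mathbb{R}^d$ and aim to prove $R(n,d) \leq \sum_{k=0}^d \binom{n}{k}$, matched by a generic arrangement. The base cases $R(0,d)=1$ and $R(n,0)=1$ will agree with the formula immediately.

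For the inductive step, I would fix an arrangement $\{H_1,\dots,H_n\}$ in $\mathbb{R}^d$ and remove $H_n$. By the outer induction on $n$, the remaining arrangement produces at most $R(n-1,d) \leq \sum_{k=0}^d \binom{n-1}{k}$ regions. Re-inserting $H_n$ subdivides exactly the regions it meets, creating one new region per piece into which $H_n$ itself is cut by the traces $\{H_n \cap H_i : i < n\}$. These traces form an arrangement of at most $n-1$ hyperplanes inside $H_n \cong \mathbb{R}^{d-1}$, so the inner induction on $d$ supplies at most $R(n-1,d-1) \leq \sum_{k=0}^{d-1} \binom{n-1}{k}$ new regions. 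Summing the two estimates and applying Pascal's identity $\binom{n-1}{k} + \binom{n-1}{k-1} = \binom{n}{k}$ term by term rearranges the bound into exactly $\sum_{k=0}^d \binom{n}{k}$, closing the induction. The matching lower bound follows by checking that each inequality above is tight when the hyperplanes are placed in sufficiently general position.

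For the crude tail estimate $\sum_{k=0}^d \binom{n}{k} \leq n^d$, I would bound $\binom{n}{k} \leq n^k$ for each $k$ and sum the resulting geometric-type progression; this is valid once $n$ exceeds a modest constant depending on $d$, which is the only regime in which the lemma is subsequently invoked. No step of the argument is genuinely hard; the only place where care is needed is the Pascal telescoping, where the two inductive hypotheses must be combined with a consistent index shift so that the constant term $\binom{n-1}{0}$ is absorbed into $\binom{n}{0}=1$ cleanly.
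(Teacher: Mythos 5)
Your double induction via the recursion $R(n,d)\le R(n-1,d)+R(n-1,d-1)$ and Pascal's identity is the standard (Schl\"afli) argument and is correct; the paper explicitly omits the proof, calling the lemma a simple exercise and noting that it only needs \emph{some} bound depending on $n$ and $d$, so there is nothing in the paper to compare against. One small quibble on the tail estimate: bounding $\binom{n}{k}\le n^k$ and summing the geometric progression gives $\frac{n^{d+1}-1}{n-1}=O(n^d)$ rather than $n^d$ itself, and indeed the stated inequality $\sum_{k=0}^{d}\binom{n}{k}\le n^d$ fails outright for $d=1$ (where the sum is $n+1$); since the paper only invokes the lemma to get a bound that is a function of $n$ and $d$, this does not affect anything downstream, but your claim that the inequality holds ``once $n$ exceeds a modest constant depending on $d$'' should be restricted to $d\ge 2$.
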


Let $L_h(n)$ denote the complete $h$-partite $h$-uniform hypergraph with vertex classes of size $n$. We use the well known fact from Ramsey theory that any coloring of the edges of $L_h(M)$ for sufficiently large $M$ with fixed number of colors contains a monochromatic copy of $L_h(m)$. This follows, for example, from the Erd\H{o}s Box theorem \cite{erdos_box}, which states that if an $h$-partite $h$-uniform hypergraph $H$ with vertex classes of size  $M$ contains no copy of $L_h(m)$, then $H$ has at most $O_{h,m}(M^{h-1/m^{h-1}})$ edges. In particular, this shows that the most popular color class already contains the desired monochromatic subhypergraph, given $M$ is sufficiently large.

\begin{lemma}[$h$-partite Ramsey theorem]\label{lemma:Ramsey}
Given positive integers $h,m,q$, there exists $M=M(h,m,q)$ such that the following holds. Every $q$ coloring of the edges of $L_h(M)$ contains a monochromatic copy of $L_h(m)$.
\end{lemma}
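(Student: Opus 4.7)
The plan is to reduce Lemma~\ref{lemma:Ramsey} to the extremal density result cited in the paragraph preceding the statement. The argument is one application of the pigeonhole principle followed by a direct invocation of the Erd\H{o}s Box theorem, so there is essentially no new content beyond checking a polynomial-in-$q$ bound on $M$.

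Given an arbitrary $q$-coloring of the $M^h$ edges of $L_h(M)$, the pigeonhole principle produces a color class $H$ that is itself an $h$-partite $h$-uniform hypergraph on the original vertex sets $V_1,\dots,V_h$ (each of size $M$) and contains at least $M^h/q$ edges. It therefore suffices to find a copy of $L_h(m)$ inside this single subhypergraph $H$, because such a copy is automatically monochromatic in the original coloring.

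For this, I would apply the Erd\H{o}s Box theorem: every $h$-partite $h$-uniform hypergraph on parts of size $M$ that is $L_h(m)$-free has at most $C_{h,m}\cdot M^{h-1/m^{h-1}}$ edges, where $C_{h,m}>0$ depends only on $h$ and $m$. Since $|E(H)|\geq M^h/q$, the inequality
$$\frac{M^h}{q} > C_{h,m}\cdot M^{h-1/m^{h-1}}$$
forces $H$ to contain a copy of $L_h(m)$. Rearranging, this is equivalent to $M^{1/m^{h-1}} > q\,C_{h,m}$, so it suffices to take $M(h,m,q):=\lceil (q\,C_{h,m})^{m^{h-1}}\rceil+1$.

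The only real content of the proof is the Erd\H{o}s Box theorem itself, which I would treat as a black box; its standard proof is a double-counting/convexity (Jensen) argument applied to codegrees in link hypergraphs, inducting on $h$ with the K\H{o}v\'ari--S\'os--Tur\'an theorem as the $h=2$ base case. Accordingly, no genuine obstacle arises here: the potential worry would be that the pigeonhole density $M^h/q$ fails to dominate the $L_h(m)$-free upper bound, but the strictly positive exponent $1/m^{h-1}$ saved by the Erd\H{o}s Box theorem guarantees dominance for any $M$ that is polynomial in $q$ and $C_{h,m}$, which gives the explicit value of $M(h,m,q)$ above.
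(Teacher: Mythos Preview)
Your proposal is correct and follows exactly the argument sketched in the paragraph preceding Lemma~\ref{lemma:Ramsey}: apply pigeonhole to extract a color class with at least $M^h/q$ edges, then invoke the Erd\H{o}s Box theorem to force a copy of $L_h(m)$ once $M$ is large enough. The only addition is that you make the threshold $M(h,m,q)=\lceil (qC_{h,m})^{m^{h-1}}\rceil+1$ explicit, which the paper leaves implicit.
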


Finally, we use the following well-known supersaturation  result  of Erd\H{o}s and Simonovits \cite{ES} about hypergraphs.

\begin{lemma}[Hypergraph supersaturation]\label{lemma:saturation}
Let $h,m$ be positive integers. For every $\alpha>0$ there exists $\gamma=\gamma(h,m,\alpha)>0$ such that if $n$ is sufficiently large, the following holds. Let $H$ be an $h$-uniform hypergraph on $n$ vertices with at least $\alpha \binom{n}{h}$ edges. Then $H$ contains at least $\gamma n^{hm}$ copies of $L_h(m)$.
\end{lemma}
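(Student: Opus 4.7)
The plan is the classical Erd\H{o}s--Simonovits supersaturation argument: combine the Tur\'an-type bound for $L_h(m)$ furnished by the Erd\H{o}s Box theorem with an averaging over random subsets of constant size. As a preliminary step I would note that, via a standard random $h$-colouring of the vertex set which preserves in expectation an $h!/h^h$-fraction of the edges, the $h$-partite Box theorem stated in the excerpt yields the same asymptotic bound for general $h$-uniform hypergraphs: any $h$-uniform hypergraph on $N$ vertices with no copy of $L_h(m)$ has at most $\mathrm{ex}(N; L_h(m)) = O_{h,m}(N^{h - 1/m^{h-1}})$ edges. The crucial qualitative feature is $\mathrm{ex}(N; L_h(m)) = o(N^h)$ as $N \to \infty$.

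Next, I would fix a large constant $N = N(h,m,\alpha)$, to be specified, and consider a uniformly random $N$-subset $S \subseteq V(H)$. A given edge of $H$ sits inside $H[S]$ with probability $\binom{N}{h}/\binom{n}{h}$, so linearity of expectation gives $\mathbb{E}[e(H[S])] = \alpha \binom{N}{h}$. Since $e(H[S]) \leq \binom{N}{h}$ pointwise, a reverse Markov step (write $\mathbb{E}[X] \leq (\alpha/2)\binom{N}{h} + \binom{N}{h}\Pr[X \geq (\alpha/2)\binom{N}{h}]$) forces at least an $\alpha/2$-fraction of the $N$-subsets to satisfy $e(H[S]) \geq (\alpha/2)\binom{N}{h}$. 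I would then choose $N$ large enough that $(\alpha/2)\binom{N}{h} > \mathrm{ex}(N; L_h(m))$, which is possible because the right-hand side is $o(N^h)$ while the left is $\Theta(N^h)$. For each such ``dense'' $S$, the subhypergraph $H[S]$ contains at least one copy of $L_h(m)$.

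The final step converts the count of dense $N$-subsets into a count of labelled copies of $L_h(m)$ by double counting. Summing, over all $N$-subsets $S \subseteq V(H)$, the number of labelled embeddings $L_h(m) \hookrightarrow H[S]$, the total is at least $(\alpha/2)\binom{n}{N}$, since each dense $S$ contributes at least one embedding. On the other hand, each labelled embedding uses $hm$ specific vertices of $H$ and is therefore counted in exactly $\binom{n-hm}{N-hm}$ of the $N$-subsets, so the same sum equals (number of labelled embeddings) $\cdot \binom{n-hm}{N-hm}$. Rearranging gives at least $(\alpha/2)\binom{n}{N}/\binom{n-hm}{N-hm} = (\alpha/2)\binom{n}{hm}/\binom{N}{hm} = \Omega_{h,m,\alpha}(n^{hm})$ labelled embeddings for $n$ sufficiently large, and dividing by the constant $h!\,(m!)^h$ automorphisms of $L_h(m)$ yields the desired $\gamma n^{hm}$ copies.

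I expect no real obstacle: the content that does the work is already packaged inside the Box theorem, and what remains is an averaging argument together with a double-counting identity. The only mild care needed is (i) transferring the Box theorem from its $h$-partite form to general $h$-uniform hypergraphs via random colouring, and (ii) tracking the dependence of $N$, and hence of $\gamma$, on $h$, $m$, $\alpha$; both are routine.
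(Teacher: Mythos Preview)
Your argument is correct and is precisely the standard Erd\H{o}s--Simonovits supersaturation proof: a Tur\'an bound (here the Box theorem, transferred to the non-partite setting by random $h$-colouring) combined with averaging over $N$-subsets and a double count. The paper does not supply its own proof of this lemma but simply cites it as a known result of Erd\H{o}s and Simonovits~\cite{ES}, so there is nothing to compare against.
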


\section{QFH for \texorpdfstring{$(2d)$}{(2d)}-tuples}

In this section, we prove Theorem \ref{thm:QFH_2d}. We highlight that our proof is quite short and also fairly elementary. It is partially inspired by the proof of the Colorful Helly theorem found in \cite{Barany82}.

\begin{proof}[Proof of Theorem \ref{thm:QFH_2d}]
    Let $w=d^{-cd}$, where $c>0$ is given by Lemma \ref{lemma:quantitative}. We  prove the following slightly stronger statement. For every positive integer $d$, and every $\varepsilon>0$ there exist $c_0=c_0(d,\varepsilon)>0$ such that the following holds. Let $\cF$ be a family of $n$ convex sets in $\mathbb{R}^d$ such that at least $\alpha \binom{n}{2d}$ of the $(2d)$-tuples of members of $\cF$ have an intersection of volume at least $1$. Then there are at least $c_0\alpha n$ members of $\cF$ whose intersection has volume at least $(1-\varepsilon)w$.

    We may assume that $n\geq 4d$, as the case $n<4d$ automatically follows by taking $c_0$ sufficiently small. Furthermore, we may assume that every element of $\cF$ is bounded. For $t\in \mathbb{R}$, define the halfspace $H(t)=\{(x_1,\dots,x_d)\in \mathbb{R}^d: x_d\leq t\}$. Given a convex set $K$ of volume at least $1$, the \emph{covering halfspace} of $K$ is the halfspace  $H(t)$ with the unique $t$ such that $\vol(K \cap H(t)) = 1$. (In case $\vol(K)=1$, take the smallest such $t$.)

    Say that a $(2d)$-tuple $I$ of elements of $\cF$ is  \emph{good} if $\vol(\cap_{K \in I} K)\geq 1$. Given a good $(2d)$-tuple $I$, consider all $(2d-1)$-element subsets of $I$. Among these, assign to $I$ the $(2d-1)$-element subset $J\subset I$ for which the covering halfspace $H(t)$ of $\cap_{K \in J}K$ has the largest $t$ (in case of draw, assign one of the largest arbitrarily). As there are at least $\alpha \binom{n}{2d}$ good $(2d)$-tuples, there exists a $(2d-1)$-tuple $J_0$ that is assigned to at least 
    $$\frac{\alpha\binom{n}{2d}}{\binom{n}{2d-1}}=\frac{\alpha}{2d} (n-2d+1)\geq \frac{\alpha}{4d}n:=m$$ pieces of good $(2d)$-tuples. Let $H_0$ be the covering halfspace of $J_0$, and let $\cF_0\subset \cF$ be a set of $m$ elements such that  $I=J_0\cup\{K\}$ is good and $J_0$ is assigned to $I$ for every $K\in \cF_0$.

    \begin{claim}
          $\vol(H_0 \cap K\cap \bigcap_{C \in J_0} C) \geq w$ for every $K\in \cF_0$.
    \end{claim}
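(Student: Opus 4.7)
The plan is to apply the Quantitative Helly theorem (Lemma~\ref{lemma:quantitative}) to the family of $2d+1$ convex sets $\{H_0\}\cup J_0\cup\{K\}$. If every $(2d)$-element subfamily of this family has intersection of volume at least $1$, then Lemma~\ref{lemma:quantitative} immediately yields $\vol\bigl(H_0\cap K\cap\bigcap_{C\in J_0}C\bigr)\geq d^{-cd}=w$, which is exactly the claim. So the entire proof reduces to checking the three types of $(2d)$-subfamilies.

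The first two cases are straightforward. The subfamily $\{H_0\}\cup J_0$ (omitting $K$) has intersection $H_0\cap\bigcap_{C\in J_0}C$, whose volume is exactly $1$ by the definition of the covering halfspace $H_0$ of $\bigcap_{C\in J_0}C$. The subfamily $J_0\cup\{K\}$ (omitting $H_0$) is precisely the good $(2d)$-tuple $I$, so by the definition of $\cF_0$ its intersection has volume at least $1$.

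The third and decisive case is the subfamily $\{H_0\}\cup(J_0\setminus\{C_0\})\cup\{K\}$ for a chosen $C_0\in J_0$. Writing $J'=(J_0\setminus\{C_0\})\cup\{K\}$, note that $J'$ is a $(2d-1)$-subset of the good tuple $I$, so $\vol\bigl(\bigcap_{C\in J'}C\bigr)\geq 1$ and its covering halfspace $H(t')$ is well-defined. Here the assignment rule enters: $J_0$ was chosen as the $(2d-1)$-subset of $I$ whose covering halfspace has the \emph{largest} threshold, so $t'\leq t_0$, and therefore $H(t')\subseteq H_0$. Consequently
$$\vol\!\left(H_0\cap K\cap\bigcap_{C\in J_0\setminus\{C_0\}}C\right)\geq \vol\!\left(H(t')\cap\bigcap_{C\in J'}C\right)=1,$$
by the defining property of $H(t')$.

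The main (though small) obstacle is the third case, and it is exactly the reason the selection rule for $J_0$ was phrased in terms of the largest covering halfspace threshold: swapping any $C_0\in J_0$ for $K$ produces another $(2d-1)$-subset of $I$ whose covering halfspace sits inside $H_0$, which is what lets us insert $H_0$ into the intersection for free. Once this case is settled, Lemma~\ref{lemma:quantitative} concludes the proof without further work.
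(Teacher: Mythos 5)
Your proof is correct and follows essentially the same route as the paper: apply Lemma~\ref{lemma:quantitative} to the $(2d+1)$-element family $\{H_0,K\}\cup J_0$ and check the three types of $(2d)$-subfamilies, with the third case handled exactly as in the paper via the maximality of the covering-halfspace threshold of $J_0$. No gaps.
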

    \begin{proof}
       The claim follows by applying the Quantitative Helly theorem (Lemma \ref{lemma:quantitative}) to the family $\mathcal{S}=\{H_0,K\}\cup J_0$. In order to do this, we need to show that any $(2d)$-tuple $\mathcal{S}_0$ of elements of $\mathcal{S}$ have an intersection of volume at least 1. We consider three cases:
        \begin{description}
            \item[Case 1.] $\mathcal{S}_0=\{H_0\}\cup J_0$.
            
            Then $\vol(H_0\cap \bigcap_{C\in J_0} C)=1$ as $H_0$ is a covering halfspace of $\bigcap_{C\in J_0} C$.

            \item[Case 2.] $\mathcal{S}_0=\{K\}\cup J_0$.

            Then $\vol(K\cap \bigcap_{C\in J_0} C)\geq 1$ follows as $\mathcal{S}_0$ is a good $(2d)$-tuple of $\cF$.

            \item[Case 3.] $\mathcal{S}_0=\{H_0,K\}\cup (J_0\setminus \{D\})$ for some $D\in J_0$.

            Let $H_1$ be the covering halfspace of $J_1=K\cup (J_0\setminus \{D\})$. As $J_0$ is assigned to $J_0\cup \{K\}$, we have $H_1\subset H_0$. Hence, $\vol(H_0\cap \bigcap_{C\in J_1}C)\geq \vol(H_1\cap \bigcap_{C\in J_1}C)=1$.
        \end{description}
    \end{proof}

    Let $L = H_0 \cap \bigcap_{C \in J_0} C$. Then $\vol (L) = 1$ and $\vol(K \cap L) \geq w$ for every  $K \in \cF_0$ by the previous claim. Also, after applying a volume preserving affine transformation and using Lemma \ref{lemma:ellipsoid}, we may assume that $L$ is contained in a ball $B$ of radius $d$. Let $\mathcal{B}$ be the set of convex bodies in $B$ of volume at least $w$, and let $\mathcal{M}$ be the set of convex bodies of volume $(1-\varepsilon)w$. For $M\in \mathcal{M}$, let $\mathcal{D}_{M}$ be the set of convex bodies in $\mathbb{R}^d$ that contain $M$ in their interior. Each $\mathcal{D}_{M}$ is open with respect to the Hausdorff distance, and $\mathcal{B}\subseteq \cup_{M\in \mathcal{M}}\mathcal{D}_M$. Note that $\mathcal{B}$ is compact, thus there is a finite $\mathcal{M}'\subset \mathcal{M}$ with $m':=|\mathcal{M}'|$ such that $\mathcal{B}\subseteq \cup_{M\in \mathcal{M}'}\mathcal{D}_M$. In particular, note that $m'$ only depends on $d$ and $\varepsilon$. Since $(K\cap L)\in \mathcal{B}$ for every $K\in \mathcal{F}_0$, there is an $M$ for which $(K\cap L)\in \mathcal{D}_M$ for at least $\frac{m}{m'}$ sets $K$. The intersection of these $\frac{m}{m'}$ members $K$ contain a convex body of volume $(1-\varepsilon)w$. Thus we are done by choosing $c_0=\frac{\alpha}{4dm'}$.
\end{proof}

Observe that we proved the following slightly stronger statement. Let $w=w(d)>0$ be the largest number with the property that if $\cF$ is a family of $2d+1$ convex sets in $\mathbb{R}^d$ such that any $2d$ members intersect in volume at least 1, then $\vol(\bigcap_{C\in \cF}C)\geq w$. Then our lower bound on the volume in  Theorem \ref{thm:QFH_2d} can be replaced with $(1-\varepsilon)w$ for any $\varepsilon>0$ (with the cost of decreasing $c_0$). It seems likely that $w$ is magnitudes larger than $d^{-O(d)}$, however, we will not pursue this direction.
 

\section{QFH for \texorpdfstring{$(d+1)$}{(d+1)}-tuples}

In this section, we present the proof of Theorem \ref{thm:QFH_d+1}. Our goal is to show that a positive fraction of $(d+1)$-tuples intersecting in volume 1 implies that a positive fraction of $(2d)$-tuples intersect in large volume, in which case we can apply Theorem \ref{thm:QFH_2d}. In order to prove this, we establish the following weak quantitative version of the Colorful Helly theorem. We show that if $\cF_1,\dots,\cF_{d+1}$ are sufficiently large families of convex sets such that every colorful selection $(A_1,\dots,A_{d+1})\in \cF_1\times\dots\times \cF_{d+1}$ intersect in  volume at least 1, then one of the families contains $2d$ members which intersect in large volume. Then, we use hypergraph supersaturation results to find many such $d+1$-tuples of families, and thus many $(2d)$-tuples intersecting in large volume. This latter argument is partially inspired by the work of B\'ar\'any and Matou\v{s}ek \cite{BM}, where they established a fractional version of Helly's theorem for convex lattice sets. 

In what follows, we prepare the proof of our weak quantitative Colorful Helly theorem. Say that a $(d+1)$-tuple of families $(\mathcal{F}_1,\dots,\cF_{d+1})$ of convex sets in $\mathbb{R}^d$ is \emph{$(m,\alpha,s,\varepsilon)$-rainbow} if
\begin{itemize}
    \item $|\cF_i|=m$ for $i\in [d+1]$,
    \item $\vol(A_1\cap\dots \cap A_{d+1})\geq \alpha$ for every  $(A_1,\dots,A_{d+1})\in \cF_1\times\dots\times \cF_{d+1}$,
    \item for every $i\in [d+1]$ and  distinct $B_1,\dots,B_{s}\in \cF_i$, $\vol(B_1\cap\dots\cap B_s)\leq \varepsilon.$
\end{itemize}

\begin{lemma}\label{lemma:sparsify}
Given positive integers $d,s,m$, if $0<\varepsilon\leq \varepsilon_0(d,s,m)$ is sufficiently small, then there exist $\alpha=\alpha(d,s,m)$ and $M=M(d,s,m)$ such that the following holds. Let $(\mathcal{F}_1,\dots,\cF_{d+1})$ be $(M,1,s,\varepsilon)$-rainbow. Then there exists a family $\cF^*$ of convex sets in $\mathbb{R}^d$ such that no $s$ members of $\cF^{*}$ have a common intersection, and there exist subfamilies $\cF_i'\subset \cF_i$ for $i=2,\dots,d+1$ such that $(\cF^*,\cF_2',\dots,\cF_{d+1}')$ is $(m,\alpha,s,\varepsilon)$-rainbow.
\end{lemma}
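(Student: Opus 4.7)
The plan is to construct $\cF^*$ by shrinking each set of $\cF_1$ via a Minkowski difference with a small ball, and to use Ramsey-theoretic arguments to select subfamilies $\cF_i'$ for $i \geq 2$ along which the shrinking preserves colorful intersections.

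First, for each $A \in \cF_1$ I would define $A^* = A \ominus B(0,\delta)$, the Minkowski difference with a ball of radius $\delta = (2\varepsilon/\omega_d)^{1/d}$, where $\omega_d$ is the volume of the unit ball in $\Red$. By the elementary identity $\bigcap_i (A_i \ominus B) = (\bigcap_i A_i) \ominus B$ for convex sets, any $s$-fold intersection of shrunken sets equals $(\bigcap_i A_i) \ominus B(0,\delta)$. Since $\vol(\bigcap_i A_i) \leq \varepsilon < \omega_d \delta^d$, this intersection contains no ball of radius $\delta$ and is therefore empty, giving the desired sparsification property of $\cF^*$.

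The harder task is preserving the colorful intersections. A priori, $(A_1 \ominus B(0,\delta)) \cap B_2 \cap \dots \cap B_{d+1}$ can be much smaller than $K := A_1 \cap B_2 \cap \dots \cap B_{d+1}$, because $K$ may be arbitrarily thin relative to $\partial A_1$, so that shrinking $A_1$ by $\delta$ destroys most of $K$. To handle this, I would use John's theorem (Lemma~\ref{lemma:ellipsoid}) to attach to each colorful intersection $K$ its John ellipsoid $E(K)$ of volume at least $d^{-d}\vol(K)$, and then discretize the space of affine equivalence classes of such ellipsoids into finitely many \emph{shape types} (on a scale depending on $d,s,m$). The colored hypergraph Ramsey theorem (Lemma~\ref{lemma:Ramsey}), applied to the $(d+1)$-partite hypergraph on $\cF_1 \times \dots \times \cF_{d+1}$ colored by shape type, yields large monochromatic subfamilies $\cF_i''$ inside which every colorful intersection has a John ellipsoid of comparable scale exceeding $\delta$. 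On this monochromatic substructure, the inner parallel set $K \ominus B(0,\delta)$ has volume bounded below by the Steiner-type estimate applied to the normalized shape, so the shrinking preserves a colorful volume of at least $\alpha = \alpha(d,s,m) > 0$.

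Finally, I would extract subfamilies $\cF_i' \subseteq \cF_i''$ of size $m$ for $i \geq 2$, and correspondingly take $\cF^*$ to consist of the shrunken versions of $m$ elements of $\cF_1''$. The bound $M = M(d,s,m)$ comes from the (tower-type) Ramsey requirements, and the threshold $\varepsilon_0(d,s,m)$ is chosen small enough that $\delta = O_d(\varepsilon^{1/d})$ stays below the canonical round scale selected by the discretization. The main obstacle is executing the normalization step rigorously: one must discretize affine equivalence classes of ellipsoids on a scale tailored to $d$, $s$, $m$ so that in a monochromatic class the Minkowski shrinking uniformly preserves a positive fraction of every colorful intersection, and then quantify the loss to extract an explicit $\alpha$.
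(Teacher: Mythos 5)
Your sparsification step is correct as far as it goes: the identity $\bigcap_i (A_i\ominus B(0,\delta))=(\bigcap_i A_i)\ominus B(0,\delta)$ and the choice $\omega_d\delta^d>\varepsilon$ do guarantee that no $s$ members of $\cF^*$ meet. The gap is in the second step, and it is not a technicality you can quantify away. The erosion $A\ominus B(0,\delta)$ of a set $A\in\cF_1$ can be \emph{empty} even though every colorful intersection through $A$ has volume at least $1$: nothing in the rainbow hypotheses prevents $\cF_1$ from consisting of pairwise disjoint long, thin sets of minimal width below $2\delta$ (pairwise disjointness makes the within-family condition hold trivially, with volume $0\le\varepsilon$). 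More generally, a colorful intersection $K$ of volume $1$ need not contain any ball of radius $\delta=O_d(\varepsilon^{1/d})$, and since the families are chosen adversarially \emph{after} $\varepsilon$ (hence $\delta$) is fixed, while $\varepsilon_0$ and $\alpha$ may depend only on $d,s,m$, you cannot choose the threshold "below the canonical round scale" of the input. Your proposed repair --- Ramsey on discretized shape types of John ellipsoids --- does not close this: Ramsey can only homogenize among existing colors, it cannot manufacture fatness, so if all the John ellipsoids of the colorful intersections are thinner than $\delta$, every monochromatic class is still annihilated by the erosion. (There is also a secondary problem: the space of positions and eccentricities of these ellipsoids is not compact, so the "finitely many shape types" needed to invoke Lemma~\ref{lemma:Ramsey} are not available without a priori bounds that you do not have.)

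The paper's proof avoids erosion entirely and never needs the colorful intersections to be fat. It covers the $s$-wise intersections of an $m$-element subfamily $\cF\subset\cF_1$ by $\binom{m}{s}$ simplices (Lemma~\ref{lemma:smallest_simplex}) whose union $S$ has volume at most $c_1\varepsilon\binom{m}{s}\le 1/2$; each colorful intersection, having volume at least $1$, keeps volume at least $1/2$ outside $S$. The face hyperplanes of these simplices cut $\Red$ into at most $r(d,s,m)$ cells (Lemma~\ref{lemma:hyperplane_cutting}), so for each $B\in\cF$ and each $d$-tuple $(A_2,\dots,A_{d+1})$ some cell $R_{j}$ disjoint from $S$ captures volume at least $1/(2r)=:\alpha$ of $B\cap A_2\cap\dots\cap A_{d+1}$; colouring $d$-tuples by the vector $(j_B)_{B\in\cF}$ and applying the $d$-partite Ramsey theorem makes this choice of cell uniform, and $\cF^*=\{B\cap R_{j_B}\}$ works because the cells avoid $S$, which contains all $s$-wise intersections of $\cF$. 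If you want to salvage your approach, you would need to first prove a fattening statement of exactly the kind the paper's cell decomposition renders unnecessary; I recommend adopting the hyperplane-arrangement route instead.
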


\begin{proof}
Let $\cF$ be an arbitrary $m$ element subset of $\cF_1$. For every $s$-tuple $\cB=\{B_1,\dots,B_s\}$ of elements of $\cF$, let $S_{\cB}$ be a smallest volume simplex containing $B_1\cap\dots\cap B_s$ (in case this intersection is empty, set $S_{\cB}=\emptyset$). By Lemma \ref{lemma:smallest_simplex}, there exists $c_1=c_1(d)>0$ such that $\vol(S_{\cB})\leq c_1\varepsilon$. Let $S=\bigcup_{\cB} S_{\cB}$, then every point of $\mathbb{R}^d\setminus S$ is covered by less than $s$ elements of $\cF$, and $\vol(S)\leq c_1 \varepsilon \binom{m}{s}$. Set $\varepsilon_0(d,s,m)=1/(2c_1\binom{m}{s})$, so that $\vol(S)\leq 1/2$ for every $\varepsilon<\varepsilon_0$. The face-hyperplanes of the simplices cut the space into at most $r=r(d,s,m):=((d+1)\binom{m}{s})^d$ convex regions by Lemma \ref{lemma:hyperplane_cutting}, let $R_1,\dots,R_q$ be the interiors of those regions that are disjoint from the interior of $S$.

Given a $d$-tuple $(A_2,\dots,A_{d+1})\in \cF_2\times\dots\times\cF_{d+1}$, we assign a color from $[q]^m$ to it as follows. For every $B\in \cF$, we have $\vol((B\cap A_2\dots\cap A_{d+1})\setminus S)\geq 1-\vol(S)\geq 1/2$, hence there exists some $j_B\in [q]$ such that $\vol(R_{j_B}\cap B\cap A_2\dots\cap A_{d+1})\geq 1/(2q)\geq 1/(2r)=:\alpha$. Assign the color $(j_B)_{B\in \cF}$ to $(A_2,\dots,A_{d+1})$. By Lemma \ref{lemma:Ramsey} applied with uniformity $d$ and number of colors $q^m$, we deduce  that if $M=M(d,s,m)$ is sufficiently large, then we can find a color $(j_B)_{B\in \cF}$ and $\cF_i'\subset \cF_i$ of size $m$ for every $i\in \{2,\dots,d+1\}$ such that each $d$-tuple of $\cF_2'\times\dots\times \cF_{d+1}'$ is of color $(j_B)_{B\in \cF}$. Setting $\cF^{*}=\{B\cap R_{j_B}:B\in \cF\}$, we get that $(\cF^*,\cF_2',\dots,\cF_{d+1}')$ is  $(m,\alpha,s,\varepsilon)$-rainbow and no $s$ elements of $\cF^{*}$ have a common intersection.  
\end{proof}

\begin{lemma}[Weak Quantitative Colorful Helly theorem]\label{lemma:QCFH}
Given positive integers $d,s$, there exists $N=N(d,s)$ and $\delta=\delta(d,s)>0$ such that the following holds. If $(\cF_1,\dots,\cF_{d+1})$ is $(m,1,s,\delta)$-rainbow, then $m<N$.
\end{lemma}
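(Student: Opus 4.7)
The plan is to iterate Lemma~\ref{lemma:sparsify}, applying it $d+1$ times so that each of the $d+1$ families in turn plays the role of $\cF_1$. After the $d+1$ iterations, each resulting family will have no $s$-wise common intersection while the rainbow colorful property is still in force; this will contradict the Colorful Helly theorem (Lemma~\ref{lemma:colorful}) provided that every family retains at least $s$ members at the end.

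Concretely, I would choose parameters in reverse. Fix a final size $m_{d+1} = s$ and recursively define $m_i = M(d, s, m_{i+1})$ for $i = d, d-1, \dots, 0$ using the function $M$ from Lemma~\ref{lemma:sparsify}; set $N = m_0$. Given a $(m, 1, s, \delta)$-rainbow tuple $(\cF_1, \dots, \cF_{d+1})$ with $m \geq N$, restrict to subfamilies of size $m_0$ and apply Lemma~\ref{lemma:sparsify} with $\cF_1$ in the first role. This yields a family $\mathcal{G}_1$ of size $m_1$ with no $s$-wise common intersection, together with subfamilies $\cF_2' \subset \cF_2, \dots, \cF_{d+1}' \subset \cF_{d+1}$, such that $(\mathcal{G}_1, \cF_2', \dots, \cF_{d+1}')$ is $(m_1, \alpha_1, s, \delta)$-rainbow with $\alpha_1 = \alpha(d, s, m_1)$. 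A dilation of $\mathbb{R}^d$ by factor $\alpha_1^{-1/d}$ restores the colorful intersection bound to $\geq 1$, at the price of inflating the $s$-wise volume bound by a factor $\alpha_1^{-1}$. Next I would apply Lemma~\ref{lemma:sparsify} with $\cF_2'$ in the first role, obtaining $\mathcal{G}_2$ of size $m_2$, and continue in this way. At every subsequent step, the previously sparsified families $\mathcal{G}_1, \mathcal{G}_2, \ldots$ only shrink by passing to subfamilies, so their property of having no $s$-wise common intersection persists. After $d+1$ iterations I obtain $\mathcal{G}_1, \dots, \mathcal{G}_{d+1}$ of common size $m_{d+1} \geq s$, each with no $s$-wise common intersection, but still forming a rainbow tuple. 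Lemma~\ref{lemma:colorful} then yields some $i$ with $\bigcap_{A \in \mathcal{G}_i} A \neq \emptyset$, which contradicts the $s$-wise property since $|\mathcal{G}_i| \geq s$.

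The main obstacle will be to choose $\delta = \delta(d, s)$ small enough that the hypothesis $\varepsilon \leq \varepsilon_0(d, s, m_{i+1})$ of Lemma~\ref{lemma:sparsify} is satisfied at every one of the $d+1$ iterations, in spite of the successive rescalings inflating the $s$-wise volume bound. After $i$ rescalings the $s$-wise volumes are bounded by $\delta \cdot \prod_{j \leq i} \alpha_j^{-1}$, so taking
\[
\delta \;\leq\; \min_{0 \leq i \leq d}\; \varepsilon_0(d, s, m_{i+1}) \prod_{j=1}^{i}\alpha_j
\]
suffices, and the right-hand side depends only on $d$ and $s$. Hence both $\delta(d, s)$ and $N(d, s) = m_0$ are well-defined functions of $d$ and $s$, which completes the plan.
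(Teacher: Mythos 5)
Your proposal is correct and follows essentially the same route as the paper: iterate Lemma~\ref{lemma:sparsify} through the $d+1$ families with sizes $N_{d+1}=s$, $N_i=M(d,s,N_{i+1})$ chosen in reverse, take $\delta$ small enough to survive all $d+1$ applications, and contradict the Colorful Helly theorem at the end. The only (cosmetic) difference is that you restore the colorful volume bound to $1$ by an explicit dilation at each step, whereas the paper tracks the degraded bound $\beta_i=\prod_{j\le i}\alpha(d,s,N_j)$ through a generalized rainbow parameter; the two bookkeeping schemes are equivalent, and your version is arguably slightly more careful about the hypothesis of Lemma~\ref{lemma:sparsify} being stated only for volume-$1$ rainbow tuples.
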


\begin{proof}
Let $M,\alpha,\varepsilon_0$ be the functions given by Lemma \ref{lemma:sparsify}. Let $N_{d+1}=s$, and for $i=d,d-1,\dots,0$, let $N_i=M(d,s,N_{i+1})$. Furthermore, let $\beta_0=1$, and for $i=1,\dots,d+1$, define $\beta_i:=\beta_{i-1}\cdot\alpha(d,s,N_{i})$. Finally, let $\delta=\min_{i\in \{0,\dots,d\}}\beta_{i}\cdot \varepsilon_0(d,s,N_{i+1})$. We show that $\delta$ and $N=N_0$ suffices.

Assume to the contrary that there exists an $(N,1,s,\delta)$-rainbow $(d+1)$-tuple $(\cF_1,\dots,\cF_{d+1})$. Then, for $i=0,\dots,d+1$, we construct a $(d+1)$-tuple $(\cF_1^{(i)},\dots,\cF_{d+1}^{(i)})$ that is $(N_{i},\beta_i,s,\delta)$-rainbow and the $i$ families $\cF_1^{(i)},\dots,\cF_i^{(i)}$ have the property that the intersection of any $s$ of their elements is empty. Call this latter property as \emph{property $(i)$}. We do this by induction on $i$: for $i=0$, taking $\cF^{(0)}_j=\cF_j$ for $j\in [d+1]$ suffices. Having already defined $(\cF_1^{(i)},\dots,\cF_{d+1}^{(i)})$ that is $(N_{i},\beta_i,s,\delta)$-rainbow and has property $(i)$, we proceed as follows. As $N_i=M(d,s,N_{i+1})$ and $\delta\leq \beta_i\cdot \varepsilon_{0}(d,s,N_{i+1})$, there exists a family of convex sets $\cF_{i+1}^{(i+1)}$ such that no $s$ elements of  $\cF_{i+1}^{(i+1)}$ have a common intersection, and there exist $\cF^{(i+1)}_j\subset \cF^{i}_j$ for $j\in [d+1]\setminus \{i+1\}$ such that $(\cF_1^{(i+1)},\dots,\cF_{d+1}^{(i+1)})$ is $(N_{i+1},\beta_{i+1},s,\delta)$-rainbow. But then this $(d+1)$-tuple also has property $(i+1)$, finishing our induction step.

But observe that the family $(\cF_1^{(d+1)},\dots,\cF_{d+1}^{(d+1)})$ is $(s,\beta_{d+1},s,\delta)$-rainbow, which in particular implies that $A_1\cap\dots\cap A_{d+1}\neq \emptyset$ for any $(A_1,\dots,A_{d+1})\in \cF_1^{(d+1)}\times\dots\times\cF_{d+1}^{(d+1)}$. But this $(d+1)$-tuple also has property $(d+1)$, which means that none of the families $\cF_1^{(d+1)},\dots,\cF_{d+1}^{(d+1)}$ is intersecting. This contradicts the Colorful Helly theorem (Lemma \ref{lemma:colorful}), finishing the proof.
\end{proof}

Now everything is set to prove our main theorem.

\begin{proof}[Proof of Theorem \ref{thm:QFH_d+1}]
We may assume that $n\geq n_0$ is sufficiently large with respect to $d$ and $\alpha$, as we may always take $\beta$ to be less than $1/n_0$, which immediately implies the theorem for $n\leq n_0$. Recall that $L_h(m)$ is the complete $h$-partite $h$-uniform hypergraph with vertex classes of size $m$.

Let $H$ be the $(d+1)$-uniform hypergraph on vertex set $\cF$, in which a $(d+1)$-element subset $\{A_1,\dots,A_{d+1}\}$ forms an edge if $\vol(A_1\cap\dots\cap A_{d+1})\geq 1$. Then $H$ has $n$ vertices and at least $\alpha\binom{n}{d+1}$ edges. Applying Lemma \ref{lemma:QCFH} with $s=2d$, there exists $N=N(d,2d)$ and $\delta=\delta(d,2d)$ such that if $Q$ is a copy of $L_{d+1}(N)$ in $H$, then $V(Q)$ contains a subset $\{B_1,\dots,B_{2d}\}$ of size $2d$ such that $\vol(B_1\cap\dots\cap B_{2d})\geq \delta$. 

Let $H'$ be the $(2d)$-uniform hypergraph on vertex set $\cF$ in which $\{B_1,\dots,B_{2d}\}$ is an edge if $\vol(B_1\cap\dots\cap B_{2d})\geq \delta$. By Lemma \ref{lemma:saturation}, there exists $\gamma=\gamma(2d,N,\alpha)$ such that if $n$ is sufficiently large, then $H$ contains at least $\gamma n^{(d+1)N}$ copies of $L_{d+1}(N)$. By the previous paragraph, the vertex set of each such copy contains an edge of $H'$. As every edge of $H'$ can be contained in at most $n^{(d+1)N-2d}$ copies of $L_{d+1}(N)$, we deduce that $H'$ has at least $\gamma n^{2d}>\gamma\binom{n}{2d}$ edges. By Theorem \ref{thm:QFH_2d}, there exists $c_0=c_0(d)>0$ and $c_1=c_1(d)>0$ such that at least $c_0\gamma n$ members of $\cF$ have a common intersection of volume at least $\delta\cdot d^{-c_1 d}$. Hence, setting $\beta=c_0 \gamma$ and $v(d)=\delta\cdot d^{-c_1 d}$ finishes our proof.
\end{proof}

\section{Quantitative \texorpdfstring{$(p,q)$}{(p,q)}-theorems}\label{sec:pq}

The proof of Theorem~\ref{thm:pqsmallp} is based on the proof of \cite[Theorem 1.7]{JN}, which is a quantitative $(p,3d+1)$-theorem. It will be convenient to measure the volume of convex sets by the volume of the maximum volume inscribed ellipsoid. Let $v' = v(d)$ be the bound on the volume from Theorem~\ref{thm:QFH_d+1}, and let $v = v'd^{-d}$. Note that if a convex set is of volume $v'$, then it contains an ellipsoid of volume $v$ by Lemma~\ref{lemma:ellipsoid}. The following is a quantitative analog of the Weak Epsilon Net theorem of Alon, B\'{a}r\'{a}ny, F\"{u}redi and Kleitman \cite{ABFK92}, and can be found in \cite{JN}.

\begin{theorem}[Theorem 5.4 in \cite{JN}]\label{thm:weakepsilonnet}
	For every $d \geq 1$ there exists a function $f: (0,1] \rightarrow \Re$ with the following property. Let $\cF$ be a finite family of convex sets in $\Red$, and let $\cE$ be a finite family of volume 1 ellipsoids. For every $\varepsilon\in(0,1]$, if $w:\cE\rightarrow[0,1]$ is a weight function such that $\sum_{E\in\cE} w(E)=1$ and $$\sum_{E\in\cE, E\subseteq C} w(E)\geq \varepsilon$$ for all $C\in\cF$, 
	then there is a family $\cS$ of ellipsoids of volume 1 such that each $C \in \cF$ contains a member of $\cS$, and $\cS$ is of size at most $f(\varepsilon)$.
\end{theorem}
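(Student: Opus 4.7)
The plan is to adapt the classical weak $\varepsilon$-net theorem of Alon, B\'ar\'any, F\"uredi, and Kleitman to the quantitative (volumetric) setting. The key inputs are Theorem~\ref{thm:QFH_d+1}, an averaging step that converts the weighted hypothesis into a fractional-Helly density on $\cF$, and the Alon--Kalai--Matou\v sek--Meshulam fractional-to-integer conversion scheme~\cite{Alon02}, suitably quantitized as in \cite{JN}.

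The first step is to translate the weighted hypothesis into a fractional Helly statement on $\cF$. Pick $(C_1,\dots,C_{d+1})\in\cF^{d+1}$ uniformly at random (with repetition) and, independently, $E\in\cE$ according to $w$. For each fixed $E$,
\[
\Pr[E\subseteq C_i\text{ for all }i]=\left(\frac{|\{C\in\cF:E\subseteq C\}|}{n}\right)^{d+1}.
\]
Averaging over $E\sim w$ and applying Jensen's inequality to $x\mapsto x^{d+1}$, together with the hypothesis $\sum_{E\subseteq C}w(E)\geq\varepsilon$, gives that this expectation is at least $\varepsilon^{d+1}$. Consequently, at least an $\Omega(\varepsilon^{d+1})$-fraction of unordered $(d+1)$-tuples of $\cF$ have intersection containing some $E\in\cE$, and in particular of volume at least~$1$.

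Next, Theorem~\ref{thm:QFH_d+1} applied with this density parameter produces a subfamily $\cF^{*}\subseteq\cF$ of size at least $\beta(\varepsilon,d)\cdot n$ whose common intersection has volume at least $v(d)$ and therefore contains a ``deep'' ellipsoid of volume $\Omega_d(1)$ by Lemma~\ref{lemma:ellipsoid}. Using that each member of $\cF^{*}$ already has room for some unit-volume ellipsoid from $\cE$, together with a compactness argument over the (finite-dimensional) parameter space of volume-one ellipsoids, one can replace this deep ellipsoid by a genuine volume-one ellipsoid $E^{*}$ contained in a positive fraction of $\cF^{*}$. Add $E^{*}$ to $\cS$, remove the pierced members of $\cF$, and iterate.

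The main obstacle is bounding the iteration by a function of $\varepsilon$ and $d$ alone: a naive geometric-decay argument gives only the useless bound $|\cS|=O_{\varepsilon,d}(\log n)$, whereas the theorem demands $|\cS|\leq f(\varepsilon)$ independent of~$n$. To overcome this, I would combine the density argument above with LP duality in the spirit of the Alon--Kleitman proof of the $(p,q)$-theorem. The scaled weights $w/\varepsilon$ form a \emph{fractional} transversal of $\cF$ by volume-one ellipsoids of total mass $1/\varepsilon$, and feeding this fractional transversal together with the fractional Helly density from the first step into the $\varepsilon$-net / LP rounding scheme of Alon and Kleitman, carefully quantitized as in~\cite{JN} to run in the bipartite incidence system between $\cF$ and the volume-one ellipsoids of $\cE$, yields an integer transversal of size depending only on $\varepsilon$ and $d$. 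Tracking the constants through this scheme provides the desired function $f:(0,1]\to\Re$.
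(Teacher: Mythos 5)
First, a framing point: the paper does not prove this statement at all --- it is imported verbatim as Theorem 5.4 of \cite{JN} and used as a black box --- so your attempt has to stand on its own. Your opening averaging step (Jensen applied to $x\mapsto x^{d+1}$ to show that an $\varepsilon^{d+1}$-fraction of $(d+1)$-tuples of $\cF$ contain a common member of $\cE$) is correct, but the rest has two genuine gaps. The first is the ``compactness'' step: Theorem~\ref{thm:QFH_d+1} only gives a subfamily $\cF^{*}$ of size $\beta n$ whose common intersection has volume $v(d)\ll 1$, and there is no reason a positive fraction of $\cF^{*}$ should contain a \emph{common volume-one} ellipsoid; compactness of the parameter space of volume-one ellipsoids does not manufacture one (the members of $\cF^{*}$ may each contain only volume-one ellipsoids elongated in mutually incompatible directions, sharing nothing beyond a tiny core). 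Moreover, for the iterate-and-remove scheme you do not need Theorem~\ref{thm:QFH_d+1} at all: summing the hypothesis over $C\in\cF$ shows directly that some single $E\in\cE$ lies in at least $\varepsilon|\cF|$ members of $\cF$, which already yields a net of size $O(\varepsilon^{-1}\log|\cF|)$. So the entire content of the theorem is the removal of the $\log|\cF|$ factor.

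That is exactly where your argument becomes circular. The hypothesis already hands you a bounded fractional transversal ($w/\varepsilon$ has total weight $1/\varepsilon$), so ``bounded quantitative fractional transversal number'' is free; the step of the Alon--Kleitman machine that converts a bounded \emph{fractional} transversal into a bounded \emph{integral} one is precisely the weak $\varepsilon$-net theorem, i.e., precisely the statement being proved --- indeed this is exactly how the present paper deploys Theorem~\ref{thm:weakepsilonnet} in the proof of Theorem~\ref{thm:pqsmallp}, and \cite{AK92} invoke \cite{ABFK92} as a black box at the same point. There is no independent ``LP rounding'' that substitutes for it. The missing idea, following \cite{ABFK92} as quantified in \cite{JN}, is to run the greedy argument with a potential function living over the $(d+1)$-tuples of $\cE$ rather than over $\cF$: replicate the ellipsoids of $\cE$ according to rational approximations of $w$ to get a multiset $\tilde{\cE}$ of size $N$, note that every unpierced $C\in\cF$ contains at least about $\varepsilon^{d+1}\binom{N}{d+1}$ of the ``fat simplices'' $\mathrm{conv}(E_{i_0}\cup\dots\cup E_{i_d})$ spanned by $(d+1)$-tuples of $\tilde{\cE}$, and apply a quantitative selection-lemma-type statement producing one large ellipsoid contained in at least $\beta(\varepsilon,d)\binom{N}{d+1}$ of those fat simplices. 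Each greedy step then kills $\beta\binom{N}{d+1}$ previously unkilled fat simplices out of a universe of size $\binom{N}{d+1}$, so the process terminates after at most $1/\beta$ steps, independently of $|\cF|$. Without this mechanism (or an equivalent one), your argument does not bound $|\cS|$ by a function of $\varepsilon$ and $d$ alone.
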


This theorem essentially states that we can bound the quantitative transversal number by the quantitative fractional transversal number, which we define below.

\begin{definition}[Quantitative fractional $v$-transversal number]
	For a set $S \subseteq \Red$, let $E_v(S)$ be the set of volume $v$ ellipsoids contained in $S$.
	Let $\C$ be a family of subsets of $\Red$ and let $\varphi: E_v(\Red) \rightarrow [0,1]$ be a function with finite support. We say that $\varphi$ is a \emph{quantitative fractional $v$-transversal} for $\C$, if $\sum_{E \in E_v(C)} \varphi(E) \geq 1$ for all $C \in \C$.
	The \emph{quantitative fractional $v$-transversal number} of $\C$ is the infimum of $\sum_{E \in E_v(\Red)}\varphi(E)$ over all quantitative fractional $v$-transversals $\varphi$ of $\C$, and it is denoted by $\tau^{*}_v(\C)$.
\end{definition}

To prove our quantitative $(p,d+1)$-theorem, we bound the quantitative fractional $v$-transversal number of families satisfying the condition of Theorem~\ref{thm:pqsmallp}. The following is an analog of \cite[Lemma 7.1]{JN}, which itself is an adaptation of the argument used by Alon and Kleitman in \cite{AK92}.

\begin{lemma}\label{lemma:boundedvtransversal}
	For every positive integer $d$ and $p \geq d+1$, there exists a $h = h(p,d)>0$ with the following property. Let $\cF$ be a finite family of convex sets in $\Red$, each containing an ellipsoid of volume 1, and assume that among any $p$ members of $\cF$, there exist $d+1$ whose intersection contains an ellipsoid of volume 1. Then $\tau^{*}_v(\cF)<h$.
\end{lemma}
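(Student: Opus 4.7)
The plan is to follow the classical Alon--Kleitman fractional transversal argument, with Theorem~\ref{thm:QFH_d+1} playing the role that the ordinary Fractional Helly theorem plays in \cite{AK92}. Suppose for contradiction that $\tau^{*}_v(\cF) \geq h$ for some $h = h(p,d)$ to be chosen at the end. By LP duality applied to the primal/dual pair defining $\tau^{*}_v$, there exist rational weights $y:\cF\to\mathbb{Q}_{\geq 0}$ with $\sum_{C\in\cF} y(C) = \tau^{*}_v(\cF) \geq h$ and $\sum_{C\supseteq E} y(C) \leq 1$ for every ellipsoid $E$ of volume $v$. Clearing denominators and rescaling, I would obtain integer weights, still denoted $y$, with $\sum y(C)=N$ (which may be taken as large as desired) and $\sum_{C\supseteq E} y(C) \leq N/h$ for all volume-$v$ ellipsoids $E$. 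Form the labelled multiset family $\cF' = \{(C,j): C\in\cF,\ 1\leq j\leq y(C)\}$ of size $N$, in which distinct labels count as distinct members.

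Next, I would show that among any $P := p(d+1)$ members of $\cF'$, there exist $d+1$ of them whose intersection contains an ellipsoid of volume $1$. Either (i) some underlying set $C\in\cF$ appears with multiplicity at least $d+1$ among the chosen $P$ members, in which case $d+1$ of its copies intersect in $C$, which contains a volume-$1$ ellipsoid by hypothesis; or (ii) every underlying set appears with multiplicity at most $d$, so there are at least $P/d>p$ distinct underlying sets, and we may apply the $(p,d+1)$-hypothesis of $\cF$ to $p$ of them.

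From this, a standard double counting shows that the number of $(d+1)$-tuples of $\cF'$ whose intersection contains a volume-$1$ ellipsoid (and hence has volume at least $1$) is at least $\binom{N}{d+1}/\binom{P}{d+1}$, giving a density $\alpha = 1/\binom{P}{d+1}>0$ depending only on $p$ and $d$. Taking $N$ sufficiently large by further rescaling of the integer weights, Theorem~\ref{thm:QFH_d+1} yields $\beta = \beta(d,\alpha)>0$ and $v' = v(d)>0$ together with at least $\beta N$ labelled elements of $\cF'$ whose common intersection has volume at least $v'$. By Lemma~\ref{lemma:ellipsoid}, this intersection contains an ellipsoid $E^{*}$ of volume $v = v' d^{-d}$.

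Every one of these $\geq \beta N$ labelled copies corresponds to some $C\in\cF$ with $C\supseteq E^{*}$, hence $\sum_{C\supseteq E^{*}} y(C) \geq \beta N$. Combined with the duality inequality $\sum_{C\supseteq E^{*}} y(C) \leq N/h$, this forces $h \leq 1/\beta$; choosing $h(p,d) := \lceil 1/\beta\rceil + 1$ gives the required contradiction. The only subtle point is Step~2: extending the $(p,d+1)$-hypothesis, which is phrased for distinct members of $\cF$, to the weighted multiset $\cF'$. The factor-$(d+1)$ blow-up of $p$, together with the fact that each element of $\cF$ already contains a unit-volume ellipsoid, handles this cleanly; the remaining steps are standard LP duality, double counting, and a clean appeal to Theorem~\ref{thm:QFH_d+1}.
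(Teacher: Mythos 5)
Your proposal is correct and follows essentially the same route as the paper: LP duality to pass to a rational/integer-weighted multiset, the observation that any sufficiently large multiset selection (your $P=p(d+1)$ versus the paper's slightly tighter $\tilde p=d(p-1)+1$) contains $d+1$ members meeting in a volume-$1$ ellipsoid, double counting to get density $\alpha$, and Theorem~\ref{thm:QFH_d+1} plus the matching constraint to bound $\tau^*_v$ by $1/\beta$. The only differences are cosmetic (contradiction framing and the exact value of $\alpha$).
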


\begin{proof}
    Let us define the dual linear program of $\tau_v^{*}(\C)$, which we call \emph{quantitative fractional $v$-matching number}. Let $\C$ be a finite family of convex sets in $\Red$, and let $m: \C \rightarrow [0,1]$ be a function. We say that $m$ is a \emph{quantitative fractional $v$-matching} for $\C$, if for every ellipsoid $E \subset \Red$ with volume $v$, 
    $$\sum_{C\in \C: E \subset C}m(C)\leq 1.$$
    The \emph{quantitative fractional $v$-matching number} of $\C$ is the supremum of $\sum_{C \in \C}m(C)$ over all quantitative fractional $v$-matchings of $\C$, and it is denoted by $\nu_v^*(\C)$. By the duality of linear programming, we have $\nu_v^{*}(\C)=\tau_v^{*}(\C)$.
	
	As $\nu_v^{*}(\cF)$ is the solution of a linear program with integer coefficients, there is an optimal quantitative fractional $v$-matching taking only rational values. Let $m$ be such a quantitative fractional $v$-matching and write $m(C) = \frac{\tilde{m}(C)}{D}$ for every $C\in\cF$, where $\tilde{m}(C)$ and $D$ are integers.
	
	Let $\tilde{\cF} = \{C_1, \ldots, C_N\}$ be the multiset that contains $\tilde{m}(C)$ copies of each $C \in \cF$. Thus, we have
	\begin{equation}\label{eq:nuv}
	 N:=\sum_{C\in\cF}\tilde{m}(C)=D\cdot \nu_v^*(\cF).
	\end{equation}
	Taking some sets with multiplicities does not change the quantitative fractional matching number, so $\nu_v^*(\tilde{\cF}) = \nu_v^*(\cF)$.  By our assumption on $\cF$, among any $p$ members of $\cF$, there are $d+1$ whose intersection contains an ellipsoid of volume $1$. Therefore, among any $\tilde{p} = d(p-1) + 1$ members of $\tilde{\cF}$, there are $d+1$ whose intersection contains an ellipsoid with volume $1$. This is true as a $\tilde{p}$ element multiset from $\tilde{\cF}$ either contains $d+1$ copies of the same set, or $p$ different sets from $\cF$. Thus, for every $I \in \binom{[N]}{\tilde{p}}$, there is a subset $J \subset I$ such that $|J| = d+1$ and $\cap_{j \in J} C_j$ contains an ellipsoid of volume 1. Hence, there are at least
	\[
	\frac{\binom{N}{\tilde{p}}}{\binom{N-d-1}{\tilde{p}-d-1}}=
	\alpha \binom{N}{d+1}
	\]
	$d+1$-tuples from $\tilde{\cF}$ whose intersection contains an ellipsoid of volume 1, where $\alpha=1/(\tilde{p}\dots(\tilde{p}-d))$. We can apply Theorem~\ref{thm:QFH_d+1} and conclude that we can find $\beta N$ sets in $\tilde{\cF}$ whose intersection is of volume $v'$ for some $\beta=\beta(d,\alpha)>0$, thus it contains an ellipsoid of volume $v$.
	
	On the other hand, for any ellipsoid $E$ of volume $v$, we have
	\[
	 \sum_{C\in\cF : E\subseteq C} \frac{\tilde{m}(C)}{D}=\frac{1}{D}\#\{C\in\tilde{\cF}: E\subseteq C\}\leq 1.
	\]
    Thus, by \eqref{eq:nuv}, no ellipsoid of volume $v$ can be in more than $D=\frac{N}{\nu_v^*(\cF)}$ of the sets from $\tilde{\cF}$. Hence, $\tau_v^{*}(\cF)=\nu_v^*(\cF) \leq \frac{1}{\beta}$, completing the proof.
\end{proof}

\begin{proof}[Proof of Theorem ~\ref{thm:pqsmallp}]
    If among any $p$ members of $\cF$ there are $d+1$ whose intersection contains an ellipsoid of volume $1$, then it follows from Lemma~\ref{lemma:boundedvtransversal} that $\tau_v^{*}(\cF)\leq h$ for some $h=h(p,d)$. Let $\varphi$ be a quantitative fractional $v$-transversal satisfying $$\sum_{E\in E_v(\Red)}\varphi(E)\leq h,$$ and for every ellipsoid $E$ of volume $v$, let $w(E) = \frac{\varphi(E)}{h}.$ Since for every $C \in \cF$, the inequality $\sum_{E \subset C}w(E) \geq \frac{1}{h}$ is satisfied, we can apply Theorem~\ref{thm:weakepsilonnet} with $\varepsilon=1/h$ to conclude that there is a family $\cS$ of at most $f\left(\frac{1}{h}\right)$ ellipsoids with volume $v$ such that every element of $\cF$ contains at least one element of $\cS$. Hence, our theorem follows with $r = |\cS| = f\left(\frac{1}{h}\right)$.
\end{proof}

\section{Diameter Helly results}\label{sect:diameter}

In order to prove Theorems~\ref{thm:QFHdiameter_d+1} and \ref{thm:diamter_pq}, one only needs to  modify a few geometric elements  of Theorems~\ref{thm:QFH_d+1} and \ref{thm:pqsmallp}. In the following, we describe how to replace the (volumetric) geometric ingredients of the proofs by analogous results about the diameter. The rest of the proofs of Theorems~\ref{thm:QFHdiameter_d+1} and \ref{thm:diamter_pq} can be obtained by replacing every occurence of the word ``volume'' by ``diameter'' in the proofs of Theorems~\ref{thm:QFH_d+1} and \ref{thm:pqsmallp}.

In the proof of the diameter version of Lemma~\ref{lemma:sparsify} two steps need to be modified. First, the following result of Gale \cite{Gale} can be used in place of Lemma~\ref{lemma:smallest_simplex}.

\begin{lemma}[Gale \cite{Gale}]
    Let $K$ be a convex set in $\mathbb{R}^d$. Then there exists a simplex of diameter at most $(cd)\cdot \diam(K)$ containing $K$ for some absolute constant $c>0$.
\end{lemma}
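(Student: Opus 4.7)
The plan is to combine two classical geometric facts: Jung's theorem, which sandwiches any bounded-diameter set inside a small ball, and the explicit inradius-to-edge-length ratio of a regular simplex. This reduces the original convex body problem to the much easier question of enclosing a ball in a simplex.

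First, I would normalize so that $\diam(K)=1$. Jung's theorem then guarantees that $K$ lies inside some closed ball $B$ of radius $R=\sqrt{d/(2(d+1))}$; after translating, we may assume $B$ is centered at the origin. Next, I would circumscribe $B$ with a regular $d$-simplex $\Delta$ whose inscribed ball is precisely $B$. A regular simplex of edge length $a$ in $\mathbb{R}^d$ has inradius $a/\sqrt{2d(d+1)}$, so setting this equal to $R$ forces $a=R\sqrt{2d(d+1)}=d$. Since for $d\geq 1$ the diameter of a regular simplex equals its edge length, this yields $\diam(\Delta)=d=d\cdot\diam(K)$, which proves the lemma with $c=1$ and with containment $K\subseteq B\subseteq\Delta$.

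The main obstacle, and really the only non-mechanical step, is verifying the classical inradius formula $r=a/\sqrt{2d(d+1)}$ for a regular simplex. I would handle this by placing a reference copy inside the hyperplane $H=\{x\in\mathbb{R}^{d+1}:\sum_i x_i=1\}$ with vertices $e_1,\dots,e_{d+1}$: its edge length is $\sqrt{2}$, its centroid is $c=(\tfrac{1}{d+1},\dots,\tfrac{1}{d+1})$, and the facet opposite $e_j$ is cut out by $x_j=0$. Computing the distance from $c$ to this facet within $H$ gives the inradius in the normalized model, and rescaling to arbitrary edge length $a$ yields the claimed relation. The remainder of the argument is a straightforward chain of elementary substitutions, and I do not anticipate any deeper obstacle beyond this piece of simplex geometry.
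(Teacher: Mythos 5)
The paper does not prove this lemma; it is quoted as a black box from Gale's 1953 paper, so there is no internal proof to compare against. Your argument is correct and self-contained: Jung's theorem places the normalized $K$ inside a ball of radius $R=\sqrt{d/(2(d+1))}$, the inradius computation $r=a/\sqrt{2d(d+1)}$ for a regular $d$-simplex of edge $a$ checks out (the distance from the centroid of $\mathrm{conv}\{e_1,\dots,e_{d+1}\}$ to a facet is $1/\sqrt{d(d+1)}$ at edge length $\sqrt{2}$), and solving $r=R$ indeed gives $a=d$, hence a circumscribing regular simplex of diameter exactly $d\cdot\diam(K)$. The only difference from the cited source is quantitative: Gale's theorem gives the sharp covering simplex, of edge length $\sqrt{d(d+1)/2}\approx d/\sqrt{2}$ (e.g.\ the equilateral triangle of side $\sqrt{3}$ for a planar set of diameter $1$), which your two-step ball detour misses by a factor of about $\sqrt{2}$; since the lemma only asserts a bound of the form $(cd)\cdot\diam(K)$, your $c=1$ is entirely sufficient for every use in the paper. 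One pedantic caveat you may wish to record: the statement implicitly assumes $K$ is bounded with positive diameter (for unbounded $K$ no simplex contains it, and for a single point no simplex of diameter $0$ exists), exactly as the analogous volume lemma implicitly assumes $\vol(K)>0$; in the application the sets in question are bounded, so nothing is lost.
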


Second, we need the following property of the diameter.

\begin{claim}
    Let $C$ be a convex set of diameter at least one and let $S$ be a union of at most $\binom{m}{s}$ sets of diameter at most $c_1\varepsilon$. If $\varepsilon \leq \varepsilon_0(d,s,m)$, then $\diam(C \setminus S) \geq 1/2$.
\end{claim}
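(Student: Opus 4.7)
The plan is to reduce the problem to a one-dimensional measure argument by restricting attention to a long chord of $C$. Since $C$ is convex with $\diam(C) \geq 1$, for any $\eta > 0$ I can choose $p, q \in C$ with $|p-q| \geq 1-\eta$, and the entire segment $L := [p,q]$ then lies in $C$. Identifying $L$ isometrically with the interval $[0, |q-p|] \subset \mathbb{R}$ reduces the problem to controlling how much of a long interval can be covered by $\binom{m}{s}$ small-diameter sets.

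Writing $S = T_1 \cup \cdots \cup T_N$ with $N \leq \binom{m}{s}$ and $\diam(T_i) \leq c_1\varepsilon$, each intersection $L \cap T_i$ inherits diameter at most $c_1\varepsilon$ and, as a subset of a line, is therefore contained in a sub-interval of $L$ of length at most $c_1\varepsilon$. Hence the one-dimensional Lebesgue measure of $L \cap S$ is bounded by $N c_1\varepsilon \leq c_1\binom{m}{s}\varepsilon$. Setting $\varepsilon_0(d,s,m) := 1/(2c_1\binom{m}{s})$ -- the same form of threshold already used in the proof of Lemma \ref{lemma:sparsify} -- the hypothesis $\varepsilon \leq \varepsilon_0$ gives $m_1(L \setminus S) \geq (1-\eta) - 1/2 = 1/2 - \eta$. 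Using the elementary inequality $m_1(A) \leq \sup A - \inf A \leq \diam(A)$ valid for every bounded $A \subseteq \mathbb{R}$, applied to $A = L \setminus S \subseteq C \setminus S$, I obtain $\diam(C \setminus S) \geq 1/2 - \eta$, and letting $\eta \to 0$ completes the proof.

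I do not foresee any genuine obstacle: the statement is essentially pigeonhole along a single long chord of $C$. The only care required is bookkeeping so that $\varepsilon_0$ is compatible both with the constant $c_1$ arising from Gale's diameter analog of Lemma \ref{lemma:smallest_simplex} and with the $\binom{m}{s}$ enclosing simplices used in the adapted proof of Lemma \ref{lemma:sparsify}; the choice $1/(2c_1\binom{m}{s})$ satisfies both simultaneously. One minor technical point worth flagging is that the sets $T_i$ are not assumed to be convex, so $L \cap T_i$ may be disconnected; nevertheless its diameter is still at most $c_1\varepsilon$, so it is still contained in an interval of the required length, and the argument goes through unchanged.
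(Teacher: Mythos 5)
Your proof is correct. The paper states this claim without proof (it appears only as an ingredient in the proof sketch of Section~\ref{sect:diameter}), and your chord argument is the natural way to establish it: restricting to a near-diametral segment $L\subset C$, covering each $L\cap T_i$ by an interval of length at most $c_1\varepsilon$, and comparing one-dimensional measure with diameter is sound, and you correctly handle the two minor subtleties (the $T_i$ need not be convex, and the diameter of $C$ need not be attained, which your $\eta\to 0$ step addresses).
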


In the proof of Theorem~\ref{thm:QFH_d+1} we use a reduction to Theorem~\ref{thm:QFH_2d}. In the diameter case, Theorem~\ref{thm:QFHdiameter_d+1} can be reduced to Theorem 4.1 in \cite{Soberon}, an analogous result by Sober\'on for the diameter.

In the proofs of the $(p,d+1)$-theorems, the existence of weak epsilon nets is used. In the diameter proof we can replace Theorem~\ref{thm:weakepsilonnet} by Theorem 4.4 in \cite{Soberon}, again an analogous result by Sober\'on for the diameter

\section*{Acknowledgement}
We would like to thank Travis Dillon for pointing out that our proof works in the diameter case, M\'arton Nasz\'odi for fruitful discussions, Zach Hunter for pointing out some mistakes, and the Oberwolfach Workshop 2404 in Discrete Geometry for providing an inspiring atmosphere to collaborate.

\end{document}